%% LyX 2.3.3 created this file.  For more info, see http://www.lyx.org/.
%% Do not edit unless you really know what you are doing.
\documentclass[oneside,english]{amsart}
\usepackage[T1]{fontenc}
\usepackage[latin9]{inputenc}
\usepackage{amsbsy}
\usepackage{amstext}
\usepackage{amsthm}
\usepackage{amssymb}
\usepackage[nocompress]{cite}

\makeatletter
%%%%%%%%%%%%%%%%%%%%%%%%%%%%%% Textclass specific LaTeX commands.
\numberwithin{equation}{section}
\numberwithin{figure}{section}
\theoremstyle{plain}
\newtheorem{thm}{\protect\theoremname}[section]
\theoremstyle{definition}
\newtheorem{defn}[thm]{\protect\definitionname}
\theoremstyle{definition}
\newtheorem{example}[thm]{\protect\examplename}
\theoremstyle{plain}
\newtheorem{prop}[thm]{\protect\propositionname}

\makeatother

\usepackage{babel}
\providecommand{\definitionname}{Definition}
\providecommand{\examplename}{Example}
\providecommand{\propositionname}{Proposition}
\providecommand{\theoremname}{Theorem}

\begin{document} \sloppy
\title[High dimensional Hoffman bound and extremal combinatorics]{High dimensional Hoffman bound and applications in extremal combinatorics}
\author[Y. Filmus]{Yuval Filmus}
\author[K. Golubev]{Konstantin Golubev}
\author[N. Lifshitz]{Noam Lifshitz}
\address[Y. Filmus]{yuvalfi@cs.technion.ac.il, Technion -- Israel Institute of Technology, Haifa, Israel.}
\address[K. Golubev]{golubevk@ethz.ch, D-MATH, ETH Zurich, Switzerland.}
\address[N. Lifshitz]{noam.lifshitz@gmail.com, Einstein Institute of Mathematics, Hebrew University, Jerusalem, Israel.}
\date{\today}
\keywords{chromatic number, independence ratio, hypergraph, extremal set theory}
\subjclass[2010]{Primary 05C15; Secondary 05C65, 05D05}
\begin{abstract}
The $n$-th tensor power of a graph with vertex set $V$ is the graph
on the vertex set $V^{n}$, where two vertices are connected by an
edge if they are connected in each coordinate. The problem of studying
independent sets in tensor powers of graphs is central in combinatorics,
and its study involves a beautiful combination of analytical and combinatorial
techniques. One powerful method for upper-bounding the largest independent
set in a graph is the Hoffman bound, which gives an upper bound on
the largest independent set of a graph in terms of its eigenvalues.
It is easily seen that the Hoffman bound is sharp on the tensor power
of a graph whenever it is sharp for the original graph.

In this paper we introduce the related problem of upper-bounding independent
sets in tensor powers of hypergraphs. We show that many of the prominent
open problems in extremal combinatorics, such as the Turán problem
for (hyper-)graphs, can be encoded as special cases of this problem.
We also give a new generalization of the Hoffman bound for hypergraphs
which is sharp for the tensor power of a hypergraph whenever it is
sharp for the original hypergraph.

As an application of our Hoffman bound, we make progress on the following
problem of Frankl from 1990. An extended triangle in a family of sets
is a triplet $\left\{ A,B,C\right\} \subseteq\binom{[n]}{2k}$ such
that each element of $[n]$ belongs either to none of the sets $\left\{ A,B,C\right\} $
or to exactly two of them. Frankl asked how large can a family $\mathcal{F}\subseteq\binom{\left[n\right]}{2k}$
be if it does not contain a triangle. We show that if $\frac{1}{2}n\le2k\le\frac{2}{3}n,$
then the extremal family is the \emph{star, }i.e. the family of all
sets that contains a given element. This covers the entire range in
which the star is extremal. As another application, we provide spectral proofs for Mantel's theorem on triangle-free graphs and for Frankl-Tokushige theorem on $k$-wise intersecting families.
\end{abstract}

\maketitle

\tableofcontents
\section{Introduction}

The celebrated Hoffman bound \cite{hoffman2003eigenvalues} connects
spectral graph theory with extremal combinatorics, by upper-bounding
the independence number of a graph in terms of the minimal eigenvalue
of its adjacency matrix. The Hoffman bound, in a generalized version
due to Lovász \cite{lovasz1979shannon}, has seen many applications
in extremal set theory and theoretical computer science.

The Hoffman bound can be used to solve problems in extremal set theory
in which the constraints can be modeled as a graph. As an example,
the Hoffman bound can be used to prove the fundamental Erd\H{o}s\textendash Ko\textendash Rado
theorem on the size of intersecting families, in which the constraint
is that every two sets in the family have nonempty intersection. Other
problems involve more complex constraints, and so are not amenable
to this method. A simple example is the $s$-wise intersecting Erd\H{o}s\textendash Ko\textendash Rado
theorem, due to Frankl \cite{frankl1976sperner}, which concerns families
in which every $s$ sets have nonempty intersection. In this case
the constraints can be modeled as a \emph{hypergraph} rather than as
a graph.

Recently, Hoffman's bound has been generalized to hypergraphs \cite{bachoc2019theta,golubev2016chromatic}. The new bound is particularly attractive for upper-bounding
independent sets in tensor powers of hypergraphs, a setting which
we describe in detail below. We demonstrate the power of this method
by solving a problem of Frankl on triangle-free families and by giving
a spectral proof of Mantel's theorem. We also formulate a number of
known problems in the language of independent sets in hypergraphs.

\subsection{Notations\label{subsec:Notations}}

A multiset is an unordered collection of elements that is allowed to have repetitions, its size is the number of its elements counting the multiplicity. An \emph{$i$-multiset} is a multiset of size $i$. 
Let $V$ be a set. We denote by $V^{[i]}$ the collection of all $i$-multisets of elements of $V$, and elements of $V^{\left[i\right]}$ will be denoted by $\left[v_{1},\ldots,v_{i}\right]$.
$V^{[0]}$ consists of the the empty set.
\begin{defn}
A \emph{weighted $k$-uniform hypergraph} is a pair $X=\left(V,\mu\right)$
where $V$ is the vertex set and $\mu$ is a probability distribution
on $V^{\left[k\right]}.$
\end{defn}
For $0\leq i\leq k-1$, define a probability measure $\mu_{i}$ on
$V^{\left[i\right]}$ by the following process. First, choose a multiset
$\left[v_{1},\ldots,v_{k}\right]$ according to $\mu$, and then choose
an $i$-submultiset of it uniformly at random. We write $X^{\left(i\right)}$
for the set of elements of $V^{\left[i\right]}$ whose $\mu_{i}$
measure is positive. The elements of $X^{\left(i\right)}$ are called
the $i$\emph{-faces} of $X$, and the elements of $X^{(0)}\cup\cdots\cup X^{\left(k\right)}$
are called the \emph{faces} of $X$. Note that if $\sigma_{2}$ is
a face of $X$, then $\sigma_{1}$ is a face of $X$ for any $\sigma_{1}\subseteq\sigma_{2}.$
Note that $X^{(0)}=\{\emptyset\}$, i.e., the empty set is the one
and only $0$-face of $X$. Therefore, the collection of multisets
$\left(X^{\left(k\right)},X^{\left(k-1\right)},\ldots,X^{\left(0\right)}\right)$
can be viewed as an abstract simplicial complex of dimension $(k-1)$
(e.g., as defined in\cite{golubev2016chromatic,bachoc2019theta})
which is however allowed to have loops (multiples of a vertex in a
face). Conversely, an abstract simplicial complex can be made into
a weighted uniform hypergraph by introducing a probability measure
which is positive on its maximal faces.
\begin{defn}
A set $I\subseteq V$ is said to be \emph{independent in a $k$-uniform
hypergraph $X$ }if no $k$-face of $X$ is contained in $I$. The
largest possible value of $\mu_{0}\left(I\right),$ where $I\subseteq V$
is an independent set in $X$, is called the \emph{independence number} of $X$ and denoted $\alpha\left(X\right)$.
A subset $I\subseteq V$ is said to be an \emph{extremal independent
set} of $X$ if $\mu_{0}\left(I\right)=\alpha\left(X\right).$
\end{defn}
We can couple the distributions $\mu_{0},\dots,\mu_{k}$ into a distribution $\boldsymbol{\mu}$ over flags of $X$ by sampling a random $k$-face according to $\mu$ and removing elements from it one by one uniformly at random. It will also be useful to consider distributions $\tilde{\mu}_{i}$ on $V^{[i]}$, obtained by sampling an $i$-face according to $\mu_{i}$, and then choosing a random order of the vertices it contains.

Let $\sigma\in X^{\left(i\right)}$ be an $i$-face. Its \emph{link} in \emph{$X$} is the $(k-i)$-uniform hypergraph $X_{\sigma}=\left(V,\mu_{\sigma}\right),$ where $\mu_{\sigma}$ is the probability distribution that corresponds to the following process: sample a random flag according to $\boldsymbol{\mu}$ subject to $\boldsymbol{\mu}_{i}=\sigma$, and output $\boldsymbol{\mu}_{k}\setminus\sigma$.
For a set $A\in X^{\left(k-i\right)}$ we shall say that $\mu_{\sigma}\left(A\right)$
is the \emph{relative measure of $A$ according to $\sigma.$ }Note
that the link of the empty set is the whole hypergraph $X$ itself.

The \emph{skeleton }of $X$ is the weighted graph $S\left(X\right)$
on the vertex set $X^{\left(1\right)}$, whose edges are $X^{\left(2\right)}$,
and whose weights are given by $\mu_{2}.$ The inner product on the
space $L^{2}\left(X^{(1)},\mu_{1}\right)$ of functions on the vertices
is defined as
\[
\left\langle f,g\right\rangle =\mathbb{E}_{v\sim\mu_{1}}f(v)g(v)=\sum_{v\in X^{(1)}}f(v)g(v)\mu_{1}(v).
\]
The normalized adjacency operator $T_{X}$ of $X$ is that of the
skeleton $S(X)$. In other words, $T_{X}$ acts on $L^{2}\left(X^{(1)},\mu_{1}\right)$
as follows:
\[
\left(T_{X}f\right)(v)=\mathbb{E}_{\mu_{1}(X_{v})}[f].
\]
If $f,g\in L^{2}(X^{(1)},\mu_{1})$ then
\begin{equation}\nonumber
 \begin{split}
 \langle f,T_{X}g\rangle 
 &= \mathbb{E}_{\boldsymbol{v}\sim\mu_{1}}f(\boldsymbol{v})\mathbb{E}_{\boldsymbol{u}\sim\mu_{1}(X_{\boldsymbol{v}})}g(\boldsymbol{u})
 \\ & =\mathbb{E}_{\boldsymbol{w}\sim\boldsymbol{\mu}}f(\boldsymbol{w}_{1})g(\boldsymbol{w}_{2}\setminus\boldsymbol{w}_{1})
 =\mathbb{E}_{\boldsymbol{(u,v)}\sim\tilde{\mu}_{2}}f(\boldsymbol{v})g(\boldsymbol{u}). 
 \end{split}
\end{equation}
This shows that $T_{X}$ is self-adjoint, and so has real eigenvalues.
The matrix form of $T_{X}$ is given by the formula
\begin{equation}\label{eq:norm-adj}
T_{X}(u,v)=\begin{cases}
\frac{\mu_{2}([u,u])}{\mu_{1}(u)} & \text{if }u=v;\\
\frac{\mu_{2}([u,v])}{2\mu_{1}(u)} & \text{if }u\ne v.
\end{cases}
\end{equation}
Similar reasoning shows that we can sample $[u,v]\sim\mu_{2}$ by
sampling $u\sim\mu_{1}$ and $v\sim\mu_{1}(X_{v})$. 

Note that if $V$ is a finite set (which is the case throughout this
paper), then the largest eigenvalue of $T_{X}$ is $1$ and is achieved
on the constant function. By $\lambda(X)$ we denote the smallest
eigenvalue of $T_{X}$. For all $0\leq i<k-2$, we write 
\[
\lambda_{i}\left(X\right)=\min_{\sigma\in X^{\left(i\right)}}\left[\lambda\left(S\left(X_{\sigma}\right)\right)\right].
\]
 In other words, $\lambda_{i}\left(X\right)$ is the minimal possible
value of an eigenvalue of the normalized adjacency matrix of a skeleton
of the link of an $i$-face of $X$. Note that $\lambda_{0}(X)$ is
just the smallest eigenvalue of the normalized adjacency operator
on the skeleton of $X$.
\begin{defn}
The tensor product $X\otimes X'$ of two $k$-uniform hypergraphs
$X=\left(V,\mu\right)$ and $X=\left(V',\mu'\right)$
is a $k$-uniform hypergraph $(V\times V',\mu\times\mu')$,
where $\mu\times\mu'$ stands for the product measure on $(V\times V')^{[k]}\simeq V^{[k]}\times V'^{[k]}$.
For a $k$-uniform hypergraph $X$, we denote by $X^{\otimes n}=\underbrace{X\otimes\dots\otimes X}_{n}$
its $n$-th tensor power.
\end{defn}

\subsection{Results}
We prove a new upper bound for the independence number of a hypergraph,
and its invariance under the tensor power operation.
\begin{thm}
\label{thm:bound-intro} Let $X=\left(V,\mu\right)$ be a $k$-uniform
hypergraph. Then 
\begin{equation}
\alpha\left(X\right)\le1-\frac{1}{\left(1-\lambda_{0}\right)\left(1-\lambda_{1}\right)\cdots\left(1-\lambda_{k-2}\right)}.\label{eq:generalized Hoffman bound-1}
\end{equation}
If in addition $\lambda_{i}\leq0$ for all $0\leq i<k-2$, then for
any positive integer $n$ the following inequality holds for $X^{\otimes n}$:
\[
\alpha(X^{\otimes n})\leq1-\frac{1}{\left(1-\lambda_{0}\right)\left(1-\lambda_{1}\right)\cdots\left(1-\lambda_{d-1}\right)}.
\]
In particular, if the bound is sharp for $X$, it remains sharp for
its tensor powers, as $\alpha(X^{\otimes n})=\alpha(X)$.
\end{thm}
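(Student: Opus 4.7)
The plan is to prove the first inequality by induction on the uniformity $k$, and then to derive the tensor-power statement from it by establishing $\lambda_i(X^{\otimes n}) \ge \lambda_i(X)$ under the sign hypothesis.

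For the first inequality, the base case $k = 2$ is the classical Hoffman argument applied to the skeleton: writing $\chi_I = \alpha \mathbf{1} + g$ with $g \perp \mathbf{1}$ and $\|g\|^2 = \alpha(1-\alpha)$, independence of $I$ forces $0 = \langle \chi_I, T_X \chi_I \rangle \ge \alpha^2 + \lambda_0\,\alpha(1-\alpha)$, which rearranges to $\alpha \le 1 - 1/(1-\lambda_0)$. For the inductive step, fix an independent set $I$ with $\alpha := \mu_1(I) > 0$. For every $v \in I$, the set $I$ is independent in the $(k-1)$-uniform link $X_v$, because any $(k-1)$-face of $X_v$ contained in $I$ would, together with $v$, form a $k$-face of $X$ in $I$. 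The inductive hypothesis applied to $X_v$ --- combined with the monotonicity $\lambda_i(X_v) \ge \lambda_{i+1}(X)$, which holds because the $i$-faces of $X_v$ correspond to the $(i+1)$-faces of $X$ containing $v$, so the collection of links being minimized over is a subcollection --- gives
\[
(T_X \chi_{I^c})(v) \;\ge\; \frac{1}{P} \quad \text{for every } v \in I, \qquad P := \prod_{j=1}^{k-2}\bigl(1 - \lambda_j(X)\bigr).
\]
Averaging against $\chi_I$ with respect to $\mu_1$ and using $T_X \mathbf{1} = \mathbf{1}$ gives $\langle \chi_I, T_X \chi_I \rangle \le \alpha(1 - 1/P)$, while the same Hoffman-style computation as in the base case produces $\langle \chi_I, T_X \chi_I \rangle \ge \alpha^2 + \lambda_0(X)\,\alpha(1-\alpha)$. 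Chaining the two inequalities and dividing by $\alpha$ produces $\alpha \le 1 - 1/[(1-\lambda_0)P]$, the desired bound.

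For the tensor-power statement, it suffices to prove $\lambda_i(X^{\otimes n}) \ge \lambda_i(X)$ for every $0 \le i \le k-2$; applying the first part to $X^{\otimes n}$ then yields the claim. The spectral input is the identity $T_{X \otimes X'} = T_X \otimes T_{X'}$, a consequence of the product structure of the defining measure, so the eigenvalues of $T_{X^{\otimes n}}$ are $n$-fold products of eigenvalues of $T_X$. Each factor lies in $[-1,1]$ and the value $1$ is always attainable, so under the hypothesis $\lambda_0(X) \le 0$ the most negative such product is $\lambda_0(X) \cdot 1^{n-1}$, yielding $\lambda_0(X^{\otimes n}) = \lambda_0(X)$. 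The same reasoning applied to links --- using that the link of an $i$-face of $X^{\otimes n}$ decomposes as the tensor product of the coordinate links, each the link of an $i$-face of $X$ --- gives $\lambda_i(X^{\otimes n}) \ge \lambda_i(X)$. Sharpness is then immediate: if $I$ is extremal in $X$, then $I \times V^{n-1}$ is an independent set of $X^{\otimes n}$ with vertex measure $\alpha(X)$, providing the matching lower bound.

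The main obstacle, I anticipate, is formalizing the decomposition of links in the tensor power. The natural identification between multisets of $n$-tuples of vertices and $n$-tuples of multisets of vertices is only surjective, not bijective --- projecting a multiset of pairs onto one coordinate can merge repeated entries --- so a careful treatment is required. The cleanest route is to work directly with the product measure $\boldsymbol{\mu}^{\otimes n}$ on flags of $X^{\otimes n}$, from which both the identity $T_{X^{\otimes n}} = T_X^{\otimes n}$ and the tensor-product structure of links of $i$-faces emerge naturally from the product structure of $\boldsymbol{\mu}$.
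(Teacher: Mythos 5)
Your proposal is correct and follows essentially the same route as the paper: induction on the uniformity via the Hoffman quadratic-form inequality on the skeleton combined with independence of $I$ in the links (the paper bounds $\Pr_{[\boldsymbol{x},\boldsymbol{y}]\sim\mu_2}[\boldsymbol{x},\boldsymbol{y}\in I]$ by a maximum over $v\in I$ where you average, and you make explicit the monotonicity $\lambda_i(X_v)\ge\lambda_{i+1}(X)$ that the paper uses silently), and for tensor powers the same link-decomposition plus Kronecker-product eigenvalue argument (the paper proves the exact formula for $\lambda_i(X^{\otimes n})$, while you only need the inequality $\lambda_i(X^{\otimes n})\ge\lambda_i(X)$), with the identical sharpness example $I\times V^{n-1}$. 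Your closing remark about the multiset identification $(V\times V')^{[k]}\simeq V^{[k]}\times V'^{[k]}$ and the flag-measure fix addresses a point the paper indeed treats loosely, but it does not change the substance of the argument.
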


We apply Theorem~\ref{thm:bound-intro} to deduce the following result on  the Frankl's problem on triangle-free families, in both a uniform and a $p$-biased versions.
\begin{thm}
\label{thm:intro-frankl} 
\textbf{The uniform version.} Let $\binom{[n]}{2k}$ be the space of $2k$-subsets
of $[n]$, where $n\le4k-1$. If $\mathcal{F}\subseteq\binom{[n]}{2k}$
is a family of subsets which does not contain three distinct subsets
whose symmetric difference is empty, then $|\mathcal{F}|\le\binom{n-1}{2k-1}$.
This bound is sharp, as, for example, the family of all subsets containing
the element $1$ satisfies the condition and contains $\binom{n-1}{2k-1}$
subsets.

\textbf{The $p$-biased version.} Let $\{0,1\}^{n}$ be the space of $\{0,1\}$-vectors of length $n$
endowed with be the $p$-biased measure $\mu$, where $1/2\leq p\leq2/3$.
If $\mathcal{F}\subseteq\{0,1\}^{n}$ is a family of vectors which
does not contain three distinct vectors whose sum is zero, then $\mu(\mathcal{F})\leq p$.
This bound is sharp, as, for example, the set of all vectors having
$1$ as their first coordinate satisfies the condition and has measure
$p$.
\end{thm}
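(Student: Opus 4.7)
I would prove both halves of Theorem~\ref{thm:intro-frankl} by applying Theorem~\ref{thm:bound-intro} to a $3$-uniform hypergraph whose independent sets are (essentially) the triangle-free families. The $p$-biased half uses the tensor-power clause of Theorem~\ref{thm:bound-intro} applied to a small base, while the uniform half uses only the single-hypergraph inequality, applied on $\binom{[n]}{2k}$ directly.

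For the $p$-biased version, let $X$ be the $3$-uniform hypergraph on $V=\{0,1\}$ with face measure $\mu([0,0,0])=1-\tfrac{3p}{2}$ and $\mu([0,1,1])=\tfrac{3p}{2}$; both weights are non-negative because $p\le 2/3$, and a short check shows that the induced vertex measure $\mu_1$ is the $p$-biased measure on $\{0,1\}$. Using~\eqref{eq:norm-adj}, the nontrivial eigenvalue of $T_X$ works out to $\lambda_0=\tfrac{1-2p}{2(1-p)}$, which is non-positive precisely when $p\ge 1/2$. The link $X_1$ is concentrated on the $2$-face $[0,1]$, so its skeleton is bipartite and contributes eigenvalue $-1$, while the link $X_0$ consists only of loops and contributes $+1$; hence $\lambda_1=-1$. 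Both $\lambda_0$ and $\lambda_1$ are then non-positive, so the tensorization clause of Theorem~\ref{thm:bound-intro} gives $\alpha(X^{\otimes n})\le 1-\tfrac{1}{(1-\lambda_0)(1-\lambda_1)}=p$. The $3$-faces of $X^{\otimes n}$ correspond, via the identification $(V^n)^{[3]}\simeq(V^{[3]})^n$, to (multisets of) triples of vectors in $\{0,1\}^n$ whose coordinate-wise sum vanishes mod $2$; once the degenerate triples involving $\vec 0$ are handled separately, an independent set in $X^{\otimes n}$ is precisely a triangle-free family, and the $p$-biased bound $\mu(\mathcal{F})\le p$ follows, sharply achieved by the star.

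For the uniform version, $\binom{[n]}{2k}$ is not a tensor power, so I would apply only the first inequality of Theorem~\ref{thm:bound-intro} to the hypergraph $Y$ on $\binom{[n]}{2k}$ whose vertex measure is uniform and whose $3$-face measure is the $S_n$-invariant uniform distribution on extended triangles. Since $Y$ is $S_n$-invariant, the skeleton and each link $Y_A$ are equivariant under $S_n$ and $\mathrm{Stab}(A)$ respectively, so their normalized adjacency operators diagonalize on the isotypic decomposition of the Johnson scheme; computing $\lambda_0(Y)$ and $\lambda_1(Y)$ is then a standard evaluation on each irreducible. The bound simplifies to the density $\binom{n-1}{2k-1}/\binom{n}{2k}=2k/n$ of the star, giving $|\mathcal{F}|\le\binom{n-1}{2k-1}$. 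The main obstacle is exactly this Johnson-scheme spectral computation: the $p$-biased half reduces to two $2\times 2$ diagonalizations and the tensor invariance from Theorem~\ref{thm:bound-intro} does the rest, but in the uniform case one must identify the irreducible $S_n$-representation realizing the extremal eigenvalue both for $S(Y)$ and for its links, and this is where the range $3k\le n\le 4k-1$ (equivalently, $1/2\le 2k/n\le 2/3$) enters as the precise window in which the relevant $\lambda_0,\lambda_1$ are non-positive and the bound is sharp.
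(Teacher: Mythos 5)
Your proposal follows essentially the paper's own route in both halves: the $p$-biased half is identical (same base hypergraph on $\{0,1\}$ with $\mu([1,1,0])=\tfrac{3}{2}p$, same eigenvalues $\lambda_{0}=\tfrac{1-2p}{2(1-p)}$ and $\lambda_{1}=-1$, and the tensorization clause of Theorem~\ref{thm:bound-intro} giving $\alpha(X^{\otimes n})\le p$ for $p\ge 1/2$), and the uniform half is the same one-shot application of the bound (\ref{eq:generalized Hoffman bound}) to the triangle hypergraph on $\binom{[n]}{2k}$. The one place where your write-up stops short is exactly the step you call the main obstacle: the skeleton is the generalized Johnson graph $J(n,2k,k)$, and the input that makes the bound collapse to $2k/n$ is that for $n\le 4k-1$ its minimum normalized eigenvalue equals $\frac{n-4k}{2(n-2k)}$; determining which eigenvalue of the Johnson scheme is minimal in this range is genuinely nontrivial, and the paper discharges it by citing Brouwer et al.\ rather than redoing the representation-theoretic evaluation, so you should either cite that result or actually carry out the computation. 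Note also that no scheme-theoretic analysis is needed for the links: the $3$-faces through a fixed $A$ are exactly the triples $\{A,B,A\triangle B\}$, so every vertex link is a perfect matching and $\lambda_{1}=-1$ immediately. Finally, your aside about degenerate triples is a real point rather than a formality: a family containing the all-zeros vector can avoid three \emph{distinct} vectors summing to zero without being independent in $X^{\otimes n}$ (e.g.\ the star together with the all-zeros vector has measure $p+(1-p)^{n}$), so ``handling it separately'' costs a $(1-p)^{n}$ term; the paper silently identifies triangle-free families with independent sets, so your flag is, if anything, more careful, and the uniform statement is unaffected since extended triangles there are automatically three distinct sets.
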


Our method also provides spectral proves of Mantel's theorem on triangle-free graphs and Frankl-Tokushige theorem on $k$-wise intersecting families.
\begin{thm}
\label{thm:intro-mantel}\cite{mantel1907} If a graph on $n$ vertices
contains no triangle, then it contains at most $\left\lfloor \frac{n^{2}}{4}\right\rfloor$ edges.
\end{thm}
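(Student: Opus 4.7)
The plan is to cast Mantel's theorem as a bound on the independence number of a suitable 3-uniform hypergraph and to invoke Theorem \ref{thm:bound-intro}. Take $V = \binom{[n]}{2}$ (the edges of $K_n$) and let $\mu$ be the uniform distribution on the triples $\{e_1, e_2, e_3\} \in V^{[3]}$ that form a triangle in $K_n$. Then a set $I \subseteq V$ is independent in the resulting hypergraph $X$ if and only if, viewed as a subgraph of $K_n$, it is triangle-free; since $\mu_1$ is uniform on $V$, one has $\mu_0(I) = |I|/\binom{n}{2}$, so the theorem reduces to the estimate $\alpha(X) \le \frac{n}{2(n-1)}$.

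The substantive step is to compute $\lambda_0(X)$ and $\lambda_1(X)$. Two distinct edges of $K_n$ lie in a common triangle if and only if they share exactly one endpoint, so the skeleton $S(X)$ is the line graph of $K_n$, equivalently the Johnson graph $J(n,2)$. Its normalized adjacency matrix has the well-known spectrum $\bigl\{1,\, \tfrac{n-4}{2(n-2)},\, -\tfrac{1}{n-2}\bigr\}$ for $n \ge 4$, giving $\lambda_0 = -\tfrac{1}{n-2}$. For a vertex $e_0 = \{a,b\} \in V$, the link $X_{e_0}$ is concentrated on the pairs $\bigl\{\{a,c\},\{b,c\}\bigr\}$ with $c \in [n]\setminus\{a,b\}$, and its skeleton is the perfect matching that pairs $\{a,c\}$ with $\{b,c\}$ for each such $c$. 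The normalized adjacency operator of a perfect matching has eigenvalues $\pm 1$, so $\lambda_1 = -1$.

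Plugging these into Theorem \ref{thm:bound-intro} yields
\[
\alpha(X) \le 1 - \frac{1}{(1-\lambda_0)(1-\lambda_1)} = 1 - \frac{1}{\bigl(1 + \tfrac{1}{n-2}\bigr)\cdot 2} = \frac{n}{2(n-1)},
\]
and multiplying by $|V| = \binom{n}{2}$ gives a bound of $n^2/4$ on the number of edges of any triangle-free subgraph of $K_n$; rounding down, since the edge count is an integer, produces $\lfloor n^2/4 \rfloor$. The tiny cases $n \le 3$ are checked directly. No genuine obstacle arises beyond the two standard eigenvalue computations; the real content is the choice of encoding, which naturally matches the 3-uniform hypergraph framework of Theorem \ref{thm:bound-intro}.
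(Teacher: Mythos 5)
Your proof is correct, but it takes a genuinely different route from the paper's. You apply the link bound (Theorem \ref{thm:bound-intro}) directly to the $3$-uniform hypergraph of triangles on the vertex set $V=\binom{[n]}{2}$: since every pair of adjacent edges of $K_n$ lies in exactly one triangle, $\mu_1$ is uniform, the skeleton is the Johnson graph $J(n,2)$ with $\lambda_0=-\frac{1}{n-2}$ (for $n\ge 4$), and each vertex link is a perfect matching giving $\lambda_1=-1$; the bound $\frac{n}{2(n-1)}$ then yields $\lfloor n^2/4\rfloor$ for every $n$, with only the trivial small cases checked by hand. This closely mirrors the paper's treatment of Frankl's problem (where the skeleton is $J(n,2k,k)$ and links are again matchings), and it is arguably the cleaner argument here. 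The paper instead encodes a triangle-free graph on $2^n$ vertices as a cross-independent set in the tensor power $\mathcal{X}_2^{\otimes n}$ of a $3$-partite hypergraph; there the plain bound is \emph{not} sharp (the skeleton has $\lambda_0=-\tfrac12$, giving only $2/3$ rather than $1/2$), so the paper exploits the $S_3$-symmetry and the coordinate-swap invariance of the encoded set to argue that $1_I$ is orthogonal to the negative eigenspaces, allowing $\lambda_0$ to be replaced by $0$. What your approach buys is generality (all $n$, not just powers of $2$) and simplicity (no symmetrization refinement needed, only standard spectra); what the paper's approach buys is a demonstration of the tensor-power machinery and of the flexibility of the bound when the characteristic function is known to avoid the bottom eigenspaces --- which is the methodological point of that section. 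One cosmetic remark: the measure of $I$ on the vertex set is $\mu_1(I)$ in the paper's notation (the $\mu_0$ in the definition of $\alpha$ is evidently a typo), so your identity $|I|/\binom{n}{2}$ is the intended quantity.
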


\begin{thm}\cite{frankl2003weighted}
Let $k\ge2$ and $p\leq1-\frac{1}{k}$. Assume $\mathcal{F}\subset\mathcal{P}([n])$ is $k$-wise intersecting family of subsets of $[n]$, that is, for all $F_1,\dots,F_k\in\mathcal{F}$
\[
 F_1\cap\dots\cap F_k \neq \emptyset.
\]
Then $\mu_p(\mathcal{F}) = \sum_{F\in \mathcal{F}} p^{|F|}(1-p)^{n-|F|}\leq p$, in other words, $\mu_p$ is the $p$-biased measure on $\mathcal{P}([n])$.
\end{thm}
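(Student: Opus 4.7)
The plan is to realise the $p$-biased $k$-wise intersection problem as an independent set problem in the tensor power of a two-vertex hypergraph, and then invoke Theorem~\ref{thm:bound-intro}. Let $X_0$ be the $k$-uniform hypergraph on $V_0=\{0,1\}$ whose $k$-faces are the $k$-multisets containing at least one $0$, with measure $\mu_0$ generated by sampling $k-1$ independent $q$-biased bits and appending a deterministic $0$, where $q:=pk/(k-1)$; the hypothesis $p\le 1-1/k$ guarantees $q\in(0,1]$. A short calculation shows that the vertex marginal $\mu_1$ is the $p$-biased measure on $\{0,1\}$. By the definition of the tensor product, the $k$-faces of $X_0^{\otimes n}$ are precisely the $k$-multisets $\{F_1,\dots,F_k\}\subseteq\{0,1\}^n$ with $F_1\cap\cdots\cap F_k=\emptyset$; hence $k$-wise intersecting families are exactly the independent sets of $X_0^{\otimes n}$, and the claim reduces to $\alpha(X_0^{\otimes n})\le p$.

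The core spectral calculation is to establish $\lambda_i(X_0)<0$ for every $0\le i\le k-2$ together with the product identity $\prod_{i=0}^{k-2}(1-\lambda_i(X_0))=1/(1-p)$. The key structural observation is a self-similarity: the link of the all-ones face $[1^i]$ in $X_0$ is again a hypergraph of the same form, with parameters $(k-i,\,q(k-i-1)/(k-i))$ and the same $q$. A direct computation using Binomial$(k-1,q)$ moments yields
\[
\lambda\bigl(S(X_{0,[1^i]})\bigr)=-\frac{q}{(k-i-1)\bigl(k-i-q(k-i-1)\bigr)}<0,
\]
and, with the substitution $m=k-i$ and $a_m:=m-q(m-1)$ (so $a_1=1$ and $a_k=k(1-p)$), the corresponding product telescopes to $k\cdot a_1/a_k=1/(1-p)$.

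The main obstacle is verifying that this value is indeed $\lambda_i(X_0)=\min_{\sigma\in X_0^{(i)}}\lambda(S(X_{0,\sigma}))$; otherwise the Hoffman bound could only yield a weaker inequality. For any other $i$-face $\sigma=[0^a,1^{i-a}]$ with $a\ge 1$ the link $X_{0,\sigma}$ is \emph{unconstrained} (every $(k-i)$-multiset is a face), and an analogous explicit calculation shows that $\lambda(S(X_{0,\sigma}))$ carries an extra factor of $(1-q)$ compared with the value at $[1^i]$, making it strictly closer to $0$. Granting this, Theorem~\ref{thm:bound-intro} yields $\alpha(X_0^{\otimes n})=\alpha(X_0)=p$, with equality realised by the $n$-fold tensor of the independent set $\{1\}\subseteq V_0$, corresponding to the star $\{F\subseteq[n]:1\in F\}$ of $\mu_p$-measure $p$. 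Translating back, every $k$-wise intersecting $\mathcal{F}\subseteq\mathcal{P}([n])$ satisfies $\mu_p(\mathcal{F})\le p$, as required.
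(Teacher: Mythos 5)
Your reduction is correct (with your measure, the $k$-faces of $X_0^{\otimes n}$ are exactly the $k$-multisets with empty intersection, so $k$-wise intersecting families are precisely the independent sets), and I verified your self-similarity claim for the links at $[1^i]$, the formula $\lambda\bigl(S(X_{0,[1^i]})\bigr)=-\frac{q}{(k-i-1)\left(k-i-q(k-i-1)\right)}$, and the telescoping product $\prod_{i=0}^{k-2}\bigl(1-\lambda(S(X_{0,[1^i]}))\bigr)=\frac{1}{1-p}$. The genuine gap is exactly the step you flag and then skip. Theorem~\ref{thm:bound-intro} uses $\lambda_i(X_0)=\min_{\sigma\in X_0^{(i)}}\lambda(S(X_{0,\sigma}))$, so you must prove that every face $\sigma=[0^a,1^{i-a}]$ with $a\ge1$ satisfies $\lambda(S(X_{0,\sigma}))\ge\lambda(S(X_{0,[1^i]}))$; since your final bound $\alpha\le p$ is an equality case of the product formula, any face whose link had a more negative eigenvalue would destroy the result, so this is the crux, not a cosmetic check. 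Your justification ("an extra factor of $(1-q)$") is not correct as stated and no computation is given: for such $\sigma$ the number of ones $M$ in a random link-face is not binomial but a binomial tilted by $\binom{k-j}{a}$, and e.g.\ for $k=3$, $\sigma=[0]$ one gets $\lambda=-\frac{q(1-q)}{(2-q)(3-q)}$, which is $\frac{1-q}{3-q}$ times the value $-\frac{q}{2-q}$ at $[1]$ --- closer to zero, but not by a bare factor $1-q$. In general, writing $r=k-i$, what you need is the inequality $\frac{\mathbb{E}[M(r-M)]}{\mathbb{E}[M]\,\mathbb{E}[r-M]}\le\frac{(r-1)-(r-2)q}{r-(r-1)q}$ for these tilted distributions (with equality in the binomial case $\sigma=[1^i]$), and that requires an actual argument, uniform in $a$ and $i$.

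This is also where you diverge from the paper, and the comparison is instructive: the paper does not take the "full" hypergraph of empty-intersection patterns, but puts weight only on the two patterns $[0^{(k)}]$ and $[0,1^{(k-1)}]$, with $\mu([0,1^{(k-1)}])=\frac{k}{k-1}p$. Every $k$-wise intersecting family is still independent for that sparser hypergraph (its faces are a subset of yours), but now the links of all faces containing a $0$ have skeletons supported only on the loops $[0,0]$ and $[1,1]$, so their normalized adjacency is the identity on its support and never attains the minimum; the only nontrivial links are at $[1^{\ell}]$ and are explicit $2\times2$ matrices. In other words, the freedom in choosing $\mu$ is used precisely to kill the verification you left open. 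So either switch to such a measure, or carry out and prove the minimization claim for yours; my spot checks for small $k$ and $a$ suggest your claim is true (and your measure has the nice feature that $\lambda_0<0$ on the whole range $0<p\le1-\frac{1}{k}$), but as written the proof is incomplete at its decisive step.
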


\subsection{Structure of the Paper}
In Section~\ref{sec:Hoffman-bound-graphs}, we give a brief overview
of the method for graphs: the Hoffman bound, its behavior for tensor
product of graphs, and applications in extremal combinatorics. In
Section~\ref{sec:Generalization-to-hypergraphs}, we introduce the
required hypergraph definitions and notations, and translate a number
of known problems to the language of independent sets of hypergraphs.
In Section~\ref{sec:High-dimensional-Hoffman}, we prove Theorem \ref{thm:bound-intro} and compare the new bound to the known ones. In Sections~\ref{sec:Frankl-triangle} and~\ref{sec:Mantel-proof}, we prove Theorems~\ref{thm:intro-frankl} and~\ref{thm:intro-mantel}, respectively. \iffalse We conclude the paper with suggestions for further research in Section~\ref{sec:Open-problems}.\fi

\section{Hoffman bound for graphs\label{sec:Hoffman-bound-graphs}}
Let $G=\left(V,\mu\right)$ be a graph, that is, a $2$-uniform hypergraph in the notations of this paper. By edges of $G$ we mean the support of $\mu$ in $V^{[2]}$. A subset of $V$ is called \emph{independent} if it does not contain any edges. Recall that $\mu_1$ is the induced measure on $V$. \iffalse The stationary distribution $\mu_{G}$ on $V$ is the probability distribution in which a vertex is chosen uniformly at random out of a uniformly random edge of from $E$. The measure of a set $A\subseteq V$ is its measure w.r.t. $\mu_{G}$. Note that $\mu_{G}$ coincides with the uniform distribution on $V$ iff $G$ is regular.\fi
The \emph{independence number} of $G$ is the maximal value of $\mu_{1}\left(A\right)$ over all independent sets $A\subseteq V$. If $\mu_{1}\left(A\right)=\alpha\left(G\right)$, we say that $A$ is an \emph{extremal independent set} of $G.$ \iffalse The \emph{normalized adjacency operator} of $G$ is defined by the matrix $P_{G}$ given by $P_{G}\left(v,u\right)=\frac{1}{\deg\left(v\right)}.$ The eigenvalues of $G$ are defined to be the eigenvalues of $P_G$. \fi We write $\lambda_{\min}\left(G\right)$ for the minimal eigenvalue
of the normalized adjacency $T_G$ on $G$, see Equation~\ref{eq:norm-adj} for the formula.

The Hoffman bound for graphs gives an upper bound on the largest independent
set of $G$ in terms of its minimal eigenvalue.
\begin{thm}[Hoffman bound, \cite{hoffman2003eigenvalues}]
 Let $G$ be a graph. Then 
\[
\alpha\left(G\right)\le\frac{-\lambda_{\min}\left(G\right)}{1-\lambda_{\min}\left(G\right)}.
\]
\end{thm}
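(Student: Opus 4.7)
The plan is the classical Rayleigh-quotient argument adapted to the weighted setting of this paper. Let $A \subseteq V$ be an extremal independent set, so $\mu_{1}(A) = \alpha := \alpha(G)$. Decompose the indicator function as
\[ \mathbf{1}_{A} = \alpha\cdot\mathbf{1} + g, \qquad g := \mathbf{1}_{A} - \alpha\cdot\mathbf{1}, \]
so that $g \perp \mathbf{1}$ in the inner product $\langle\cdot,\cdot\rangle$ on $L^{2}(V,\mu_{1})$. Two ingredients drive the argument: (i) the norm, $\|g\|^{2} = \|\mathbf{1}_{A}\|^{2} - \alpha^{2} = \mathbb{E}_{\mu_{1}}\mathbf{1}_{A} - \alpha^{2} = \alpha(1-\alpha)$; and (ii) the quadratic form $\langle \mathbf{1}_{A}, T_{G}\mathbf{1}_{A}\rangle$.

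The crucial observation is that $\langle \mathbf{1}_{A}, T_{G}\mathbf{1}_{A}\rangle = 0$. Using the identity $\langle f,T_{G}g\rangle = \mathbb{E}_{(u,v)\sim\tilde{\mu}_{2}}f(v)g(u)$ derived in the excerpt, this inner product equals the probability that both endpoints of a $\tilde{\mu}_{2}$-random edge lie in $A$; since no $2$-face of $G$ is contained in $A$ (in particular the loop case $[v,v]\subseteq A$ forces $v\notin A$), this probability vanishes. Because $\mathbf{1}$ is a $T_{G}$-eigenvector with eigenvalue $1$ and $g\perp\mathbf{1}$, expanding the bilinear form yields
\[ 0 = \langle \mathbf{1}_{A}, T_{G}\mathbf{1}_{A}\rangle = \alpha^{2} + \langle g, T_{G} g\rangle, \]
and hence $\langle g, T_{G} g\rangle = -\alpha^{2}$.

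To close the argument, apply the variational lower bound $\langle g, T_{G} g\rangle \ge \lambda_{\min}(G)\,\|g\|^{2}$ to obtain $-\alpha^{2} \ge \lambda_{\min}(G)\,\alpha(1-\alpha)$. Dividing by $\alpha$ (we may assume $\alpha>0$) and rearranging, while noting that $1 - \lambda_{\min}(G) > 0$ correctly controls the direction of the inequality, gives the claimed bound $\alpha \le -\lambda_{\min}(G)/(1-\lambda_{\min}(G))$.

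There is no deep obstacle here; the single point needing genuine care is bookkeeping with the paper's weighted conventions, namely that $\|\mathbf{1}_{A}\|^{2} = \mu_{1}(A)$, that $T_{G}$ is self-adjoint with top eigenvalue $1$ on the constant function, and that the excerpt's definition of independence correctly eliminates the diagonal loop contribution to $\langle \mathbf{1}_{A}, T_{G}\mathbf{1}_{A}\rangle$. These are precisely the features that one will want to lift to links of faces in order to prove the hypergraph generalization stated as Theorem~\ref{thm:bound-intro}.
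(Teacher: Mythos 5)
Your proof is correct and is essentially the same spectral argument the paper relies on: the paper cites the graph Hoffman bound and its proof of the generalized bound (Theorem~\ref{thm:Generalized-Hoffman-bound}) at the base case $k=2$ is exactly this computation, expanding $1_A$ against the eigenbasis of $T_G$, using $\langle 1_A,T_G 1_A\rangle=0$ for an independent set, and bounding the orthogonal part by $\lambda_{\min}\|g\|^2$. Your projection-plus-Rayleigh-quotient phrasing and the paper's eigenbasis expansion are the same argument in different notation, and your bookkeeping with $\mu_1$, $\tilde\mu_2$, and loops matches the paper's conventions.
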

Ever since Hoffman's original work, the Hoffman bound has become a
central tool in the study of intersection problems in combinatorics.
For example, Lovász,~\cite{lovasz1979shannon} showed that it can be
used to prove the celebrated Erd\H{o}s\textendash Ko\textendash Rado
theorem,~\cite{erdos1961intersection}, (aka EKR theorem), which states
that for $n\geq2k$, the maximum size of an intersecting family of
$k$-element subsets of $[n]$ is $\binom{n-1}{k-1}$. The theorem
follows directly from the Hoffman bound by considering the Kneser
graph, which is the graph on the vertex set $\binom{[n]}{k}$ in which
two sets are connected if they are disjoint. It is crucial for the
proof that the Hoffman bound is tight in this case.

The Hoffman bound can be also used in a more sophisticated manner.
For instance, in~\cite{wilson1984exact}, Wilson considered the $t$-intersection variant of the EKR theorem, in which the goal is to find the largest
subfamily of $\binom{[n]}{k}$ in which any two sets have at least
$t$ elements in common. In contrast to the EKR theorem, in this case
applying Hoffman's bound on the generalized Kneser graph (in which
two sets are connected if their intersection contains less than $t$
points) does not give a tight upper bound. Instead, one needs to accurately choose weights for the edges of this graph, some of them negative,
and only then apply the Hoffman bound. In this way, Wilson managed
to determine all values of $n,k,t$ in which the extremal family is
the family of all sets that contain a given set of size $t$, namely,
$n\ge\left(t+1\right)\left(k-t+1\right)$.

This approach became more systematic in the work of Friedgut,~\cite{friedgut1998boolean},
who applied Fourier analysis to construct a matrix for the $p$-biased
version of Wilson theorem. This Fourier-analytic approach was also
useful in the proof of a long-standing problem of Simonovitz and Sós
on triangle-intersecting families of graphs,~\cite{ellis2012triangle}.
Finally, Ellis, Friedgut, and Pilpel,~\cite{ellis2011intersecting},
used a similar approach to solve an old problem of Deza and Frankl,
showing that a $t$-intersecting family of permutations in $S_{n}$
contains at most $(n-t)!$ permutations, for large enough $n$ (two
permutations $t$-intersect if they agree on the image of at least
$t$ points).

The main flaw of the Hoffman bound is the fact that it does not apply
to problems in which the constraint involves more than two sets. For
example, it cannot be used to upper-bound the size of a subfamily
of $\binom{[n]}{k}$ in which any \emph{three} sets $t$-intersect.
It is therefore of great importance to find high-dimensional analogues
of the Hoffman bound that fit these more general restrictions.

Over the years there has been great interest in the problem of generalizing results from graphs to hypergraphs (or, equivalently, simplical complexes), see e.g. \cite{AnnaGundert2015,parzanchevski2017simplicial,parzanchevski2017mixing,golubev2019spectrum,lubotzky2014ramanujan,linial2016phase,parzanchevski2016isoperimetric}.
In particular, two generalizations of the Hoffman bound were given
by \cite{bachoc2019theta} and \cite{golubev2016chromatic}. Such
results are known as \emph{high-dimensional Hoffman bounds}. The goal
of this paper is to give a new high-dimensional Hoffman bound, which
we believe is the right tool for tackling many problems in extremal
combinatorics.

\subsection{Independent sets in tensor power of graphs}

The Hoffman bound is particularly useful for independent sets in \emph{tensor powers of graphs}. Given graphs $G=\left(V,\mu\right)$, $G'=\left(V',\mu'\right)$, their tensor product is the graph $G\otimes G'$ whose vertex set is $V\times V'$ endowed with the product measure $\mu\times\mu'$. In particular, two of vertices in the product are connected
by an edge if they are connected by an edge in each coordinate. \iffalse More
generally, $G_{1},G_{2}$ are weighted graphs with loops, and $G_{1}\otimes G_{2}$
is the graph where the weight of the edge $\left\{ \left(v_{1},u_{1}\right),\left(v_{2},u_{2}\right)\right\} $
is the product of the weights of the corresponding edges of $G_{1},G_{2}.$ \fi
The $n$-th tensor power of $G$ is the graph $G^{\otimes n}=G\otimes\cdots\otimes G$.
Independent sets in tensor products of graphs are well studied, see
e.g. \cite{alon2004graph}, \cite{alon2007independent}, and \cite{dinur2008independent}.\iffalse
There are several motivations for studying independent sets in tensor
powers. One motivation for these problems is their connection to the
Hedetniemi conjecture \cite{hedetniemi1966homomorphisms}, which states
that $\chi\left(G_{1}\times G_{2}\right)=\min\left\{ \chi\left(G_{1}\right),\chi\left(G_{2}\right)\right\} $,
see Alon and Lubetzky \cite{alon2007independent} for more details
(TODO disproved by Yaroslav Shitov).
\fi 
One motivation is their connection to the theory of hardness of approximation, see~\cite{dinur2009conditional}. However, our main focus in this paper will be the connection to extremal set theory. This connection was first implicitly established by Friedgut \cite{friedgut2008measure}, and later more explicitly by Dinur and Friedgut \cite{dinur2009intersecting}. 

It is a well-known rule of thumb (backed by various results) that
the two Erd\H{o}s\textendash Rényi models of random graphs, $G\left(n,p\right)$
and $G\left(n,m\right)$, should behave similarly when $p=\frac{m}{n}$.
A similar phenomenon holds in extremal set theory: questions about
subfamilies of $\binom{[n]}{k}$ behave similarly to questions about
subsets of $\mathcal{\mathcal{P}}([n])$ with respect to the $p$-biased
measure $\mu_{p}$ for $p=k/n$, where a set $\boldsymbol{A}\subseteq\left[n\right]$
is chosen by independently putting each element of $\left[n\right]$
inside $\boldsymbol{A}$ with probability $p$ and outside of it with
probability $1-p$. We write $\mu_{p}\left(\mathcal{F}\right)$ for
the probability that a random subset $\boldsymbol{A}\sim\mu_{p}$
belongs to $\mathcal{F}.$

For instance, the EKR theorem, which asks how large can an intersecting
family $\mathcal{F}\subseteq\binom{\left[n\right]}{k}$ be, can be
rephrased as follows: 
\begin{quotation}
Suppose that $\mathcal{F}$ is intersecting.
How large can the probability that a random $k$-set belongs to $\mathcal{F}$
be?
 \end{quotation}
This formulation of the problem immediately suggests the following
$p$-biased analogue:
\begin{quotation}
How large can $\mu_{p}\left(\mathcal{F}\right)$
be if $\mathcal{F}\subseteq\mathcal{P}\left(\left[n\right]\right)$
is intersecting? 
\end{quotation}
This problem was first studied by Ahlswede and Katona,~\cite{ahlswede1977contributions}, in the 70's. While both the EKR
problem and its $p$-biased analogue have already been solved, this
connection is still useful for many of the generalizations of the
EKR theorem, where a result in the $k$-uniform setting can be deduced
from the $p$-biased setting and vice versa, see \cite{dinur2005hardness}.

The $p$-biased version of the EKR theorem is the problem of determining
the extremal independent sets in the graph $G^{\otimes n},$ where
$G$ consists of two vertices $\left\{ 0,1\right\} $ with the (undirected)
edge $\left\{ 1,0\right\} $ having the weight $2p,$ and with the
edge $\left\{ 0,0\right\} $ having the weight $1-2p.$ This observation
was used by Friedgut,~\cite{friedgut1998boolean}, Dinur and Friedgut,~\cite{dinur2005hardness}, and later by Friedgut and Regev,~\cite{friedgut2017kneser}, to apply Fourier-analytic methods that are natural in the context of graph products in order to study variants of the EKR theorem.

In a suitable basis, the matrix of the adjacency operator of a tensor product of graphs is the Kronecker product of the adjacency operator matrices of the factors. Hence, the following property holds
\[
\lambda_{\min}\left(G^{\otimes n}\right)=\begin{cases}
\lambda_{\min}\left(G\right) & \text{if }\lambda_{\min}\left(G\right)\le0;\\
\lambda_{\min}\left(G\right)^{n} & \text{if }\lambda_{\min}\left(G\right)\ge0.
\end{cases}
\]
This immediately implies that the Hoffman bound is sharp on $G^{\otimes n}$, whenever it is sharp on $G$ (the Hoffman bound is never sharp if
$\lambda_{\min}$ is positive). This reduces the $p$-biased EKR problem
for families $\mathcal{F}\subseteq\mathcal{P}\left(\left[n\right]\right)$
to the problem of showing that the Hoffman bound is sharp in the special
case $n=1$, which can be verified directly.

A subset $A$ of $G^{\otimes n}$ is called a \emph{dictatorship}
if there exists a set $B\subseteq G$ and $1\leq i\leq n$ such that a vertex $x=(x_1,\dots,x_n)$ is in $A$ iff $x_{i}$ is in $B$. The above observation shows that
if the Hoffman bound is sharp for $G$, then the Hoffman bound is
sharp for $G^{\otimes n}$ as well, and the dictatorships corresponding
to extremal independent sets of $G$ are extremal for $G^{\otimes n}$
(not necessarily exclusively). Alon, Dinur, Friedgut, and Sudakov,~\cite{alon2004graph}, showed the following stronger version of this
observation.
\begin{thm}[\cite{alon2004graph}]
 Let $G$ be a weighted connected non-bipartite graph. If the Hoffman
bound is sharp for $G$, i.e., $\alpha\left(G\right)=\frac{-\lambda_{\min}}{1-\lambda_{\min}}$,
then $\alpha\left(G^{\otimes n}\right)=\alpha\left(G\right).$ Moreover,
if $A$ is an independent set with $\mu_{G}\left(A\right)=\alpha\left(G\right)$,
then $A$ is a dictatorship.

Suppose additionally that $\mu_{G}\left(\left\{ v\right\} \right)=\Theta\left(1\right)$
for each $v\in G.$ Then for each $\epsilon>0$ there exists $\delta>0$
such that if an independent set $A$ satisfies $\mu_{G}\left(A\right)>\alpha\left(G\right)-\delta,$
then there exists an independent dictatorship $B$ such that $\mu_{G}\left(A\Delta B\right)<\epsilon.$
\end{thm}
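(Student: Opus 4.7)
The plan is to exploit the spectral decomposition of $L^{2}(V^{n},\mu_{1}^{\otimes n})$ as a tensor product of $n$ copies of $L^{2}(V,\mu_{1})$, together with the equality case of Hoffman's inequality. Since the Hoffman bound is sharp for $G$ we have $\lambda_{\min}(G)<0$ (assuming $G$ has at least one edge), so the formula recorded in the excerpt gives $\lambda_{\min}(G^{\otimes n})=\lambda_{\min}(G)$; applying the Hoffman bound to $G^{\otimes n}$ yields $\alpha(G^{\otimes n})\leq\alpha(G)$. The matching lower bound is witnessed by lifting any extremal $A\subseteq V$ to the dictatorship $A\times V^{n-1}$, which is independent and has the same measure. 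Hence $\alpha(G^{\otimes n})=\alpha(G)$.

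For the characterization of the extremal case, I would inspect the equality condition of Hoffman's proof: setting $f=1_{A}-\mu_{1}(A)\mathbf{1}$ and using independence of $A$, equality in $\langle f,T_{G^{\otimes n}}f\rangle\geq\lambda_{\min}\|f\|^{2}$ forces $f$ to be a $\lambda_{\min}$-eigenvector of $T_{G^{\otimes n}}$. Because $G$ is connected and non-bipartite, the only eigenvalue of $T_{G}$ of absolute value $1$ is $1$ itself, and it is simple. A short absolute-value argument then shows that any simple tensor $\bigotimes_{i}\phi_{i}$ of $T_{G}$-eigenvectors whose eigenvalue-product equals $\lambda_{\min}(G)$ must have exactly one factor in the $\lambda_{\min}$-eigenspace and all other factors constant. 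Consequently,
\[
1_{A}(x_{1},\dots,x_{n})=\mu_{1}(A)+\sum_{i=1}^{n}g_{i}(x_{i}),
\]
where each $g_{i}$ is either identically zero or a $\lambda_{\min}(G)$-eigenvector of $T_{G}$.

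To conclude that $A$ is a dictatorship, suppose two of the $g_{i}$, say $g_{1}$ and $g_{2}$, were nonzero. Each would then take at least two distinct values, $g_{1}(a_{1})\ne g_{1}(a_{2})$ and $g_{2}(b_{1})\ne g_{2}(b_{2})$, and fixing $x_{3},\dots,x_{n}$ would force the four numbers $g_{1}(a_{i})+g_{2}(b_{j})+C$ to all lie in $\{0,1\}$. A brief case analysis on $s=g_{1}(a_{1})+g_{2}(b_{1})+C$ rules this out (the shifts $g_{1}(a_{2})-g_{1}(a_{1})$ and $g_{2}(b_{2})-g_{2}(b_{1})$ cannot both be nonzero while keeping all four corners inside $\{0,1\}$), so at most one $g_{i}$ is nonzero and $A$ is indeed a dictatorship.

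For the stability statement, I would run the same argument quantitatively. If $\mu_{1}(A)>\alpha(G)-\delta$, then the slack in the Hoffman inequality is $O(\delta)$, so the $L^{2}$-projection of $1_{A}-\mu_{1}(A)\mathbf{1}$ onto the orthogonal complement of the $\lambda_{\min}$-eigenspace has norm $O(\sqrt{\delta})$, and $1_{A}$ is $L^{2}$-close to some degree-one function $\mu_{1}(A)+\sum_{i}g_{i}(x_{i})$ with each $g_{i}$ in the $\lambda_{\min}$-eigenspace of $T_{G}$. The main obstacle is then the rounding step: one has to show that a $\{0,1\}$-valued function which is $L^{2}$-close to such a sum of one-variable eigenfunctions must be $L^{1}$-close to the indicator of an actual dictatorship. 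This is an FKN-type statement, and it is precisely here that the regularity hypothesis $\mu_{G}(\{v\})=\Theta(1)$ enters in an essential way, allowing small $L^{2}$-discrepancy to be upgraded to pointwise closeness at each vertex, which in turn yields a dictatorship $B$ with $\mu_{G}(A\Delta B)<\epsilon$ for any prescribed $\epsilon$ once $\delta$ is sufficiently small.
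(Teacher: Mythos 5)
First, note that the paper does not actually prove this theorem: it is quoted from Alon--Dinur--Friedgut--Sudakov \cite{alon2004graph}, so there is no internal proof to compare against; the only overlap is the preceding discussion, where the identity $\lambda_{\min}(G^{\otimes n})=\lambda_{\min}(G)$ for $\lambda_{\min}\le 0$ plus the lifting of an extremal set to a dictatorship is exactly how the paper explains sharpness for tensor powers. Your treatment of the exact statements is essentially correct and is the standard route: sharpness forces $\lambda_{\min}(G)<0$, Hoffman applied to $G^{\otimes n}$ gives the upper bound, and equality forces $1_A-\mu_1(A)\mathbf{1}$ into the $\lambda_{\min}$-eigenspace of $T_{G^{\otimes n}}$. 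Your ``absolute-value argument'' does work, but you should spell out where the hypotheses enter: connectedness makes the eigenvalue $1$ simple (constant eigenvector), and non-bipartiteness gives $\lambda_{\min}>-1$, which is what rules out products of three or more negative factors reaching $\lambda_{\min}$; hence every eigentensor with eigenvalue $\lambda_{\min}$ has exactly one factor in the $\lambda_{\min}$-eigenspace and constants elsewhere. The $2\times 2$ corner argument showing that a $\{0,1\}$-valued function of the form $c+\sum_i g_i(x_i)$ with two non-constant summands is impossible is correct (the two ``shifts'' would have to be $\pm1$ and no sign pattern keeps all four corners in $\{0,1\}$), so the uniqueness of dictatorships goes through.

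The stability claim, however, is where your proposal has a genuine gap. The spectral step is fine (though you are implicitly using an $n$-independent spectral gap above $\lambda_{\min}$ in $T_{G^{\otimes n}}$, which again relies on connectedness and non-bipartiteness and deserves a sentence): small slack does give that $1_A$ is $L^2$-close to a function of the form $\mu_1(A)+\sum_i g_i(x_i)$ with each $g_i$ in the $\lambda_{\min}$-eigenspace. But from there you only assert the needed conclusion. The statement that a Boolean function $L^2$-close to such a sum of one-variable eigenfunctions must be close to an actual dictatorship is precisely the FKN-type theorem that constitutes the bulk of the work in \cite{alon2004graph}; invoking it without proof leaves the main difficulty untouched. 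Moreover, your proposed mechanism --- ``upgrading small $L^2$-discrepancy to pointwise closeness at each vertex'' --- is not how this can go: on $V^n$ every point has exponentially small measure, so $L^2$-closeness gives no pointwise control. What has to be shown is that almost all of the eigenspace weight of $1_A$ concentrates on a single coordinate (the nonzero $g_i$ are essentially one), and only then can one round to a dictatorship; the hypothesis $\mu_G(\{v\})=\Theta(1)$ enters in that concentration argument, not in a pointwise rounding. As written, the exactness and uniqueness parts are complete, but the stability part is a reduction to an unproved lemma rather than a proof.
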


\section{Known problems in the language of hypergraphs\label{sec:Generalization-to-hypergraphs}}

There are plenty of reasons why the above theory begs to be generalized
to hypergraphs (or, equivalently, simplical complexes). In addition to the definitions given in the introduction, let us give a version of them in the $k$-partite setting.
\begin{defn}
A weighted $k$-partite hypergraph is a tuple $X=\left(V_{1},\ldots,V_{k},\mu\right)$,
where $\mu$ is a probability distribution on $V_{1}\times\cdots\times V_{k}$.
The probability distribution $\mu_{X,V_{i}}$ is the probability
distribution on $V_{i}$, where a vertex is chosen as the projection
on $V_{i}$ of a random element chosen according to $\mu.$ Sets $A_{1}\subseteq V_{1},\ldots,A_{k}\subseteq V_{k}$
are said to be \emph{cross-independent} if the probability that a
random element of $\mu$ belongs to $\prod_{i=1}^{k}A_{i}$ is $0$.
The tensor power of $X$ is the $k$-partite hypergraph
$X^{\otimes n}=\left(V_{1}^{n},\ldots,V_{k}^{n},\mu^{\otimes n}\right),$
where $\mu^{\otimes n}$ is the product probability distribution.
\end{defn}

Independent sets in tensor powers of hypergraphs arise all over combinatorics,
and many of the fundamental problems in extremal combinatorics can
be formulated as problems about independent sets in tensor powers
of hypergraphs. Below we formulate several well-known problems, solved
and open, in the language of hypergraphs, and give proof to two of
them: Frankl's Triangle Problem and Mantel's Theorem.

\subsection{Number theory}

How large can a subset $A\subseteq\mathbb{F}_{q}^{n}$ be if it does
not contain elements $x_{1},\ldots,x_{m}\in A$ that form a solution
to the system of equations $\sum_{i=1}^{m}a_{ij}x_{i}=b_{j}$, for
parameters $a_{ij},b_{j}\in\mathbb{F}_{q}$? For instance, the Meshulam\textendash Roth
theorem,~\cite{meshulam1995subsets}, concerns the problem of determining
how large can a subset of $\mathbb{F}_{p}^{n}$ be if it contains
no non-trivial solutions to the equation $x+z=2y.$ Similarly, the
analogous problem for $k$-term arithmetic progressions can be formulated
as a non-trivial solution to the equations 
\[
x_{1}+x_{3}=2x_{2},x_{2}+x_{4}=2x_{3},\ldots,x_{m-2}+x_{m}=2x_{m-1}.
\]

Take the hypergraph $X=\left(\mathbb{F}_{q},\mu\right),$
where $\mu$ is positive on the solutions to the equations $\sum_{i=1}^{m}a_{ij}x_{i}=b_{j}.$
Since a solution to the system of equations in $\mathbb{F}_{q}^{n}$
is a solution iff it is a solution in each coordinate,
the hypergraph $X^{\otimes n}$ is the hypergraph whose
independent sets correspond to solutions of the same system of equations as above.
This observation was first given by Mossel,~\cite{mossel2010noise}.

\subsection{Turán problem for hypergraphs}

One of the fundamental problems in extremal combinatorics is the Turán
problem for hypergraphs, which asks how large can a family $\mathcal{F}\subseteq\binom{\left[n\right]}{k}$
be if it does not contain a copy of some given hypergraph $H.$ The
Turán problem can be restated as a problem about independent sets
in tensor powers of $k$-partite-hypergraphs, see~\cite{lifshitz2018nearly}.
Let us state it in the case of triangles in graphs.
\begin{example}
\label{exa:Mantel hypergraph} Let $K_{2,2,2}$ be the complete $3$-partite
hypergraph between sets $A_{1},A_{2},A_{3}$ of size 2, let $E_{ij}$
be the set of edges between $A_{i}$ and $A_{j}$, and let $X=\left(V_{1},V_{2},V_{3},\mu\right)$
be the $3$-partite hypergraph, where 
\[
V_{1}=E_{23},V_{2}=E_{31},V_{3}=E_{12},
\]
 and where $\mu$ is the uniform measure on the set of triangles in $K_{2,2,2}.$ The hypergraph
\[
X^{\otimes n}
\]
is the hypergraph whose vertices correspond to the edges in $K_{2^{n},2^{n},2^{n}}$
and whose edges correspond to the triangles in $K_{2^{n},2^{n},2^{n}}.$
Therefore, cross-independent sets correspond to three directed graphs
\[
G_{1}\subseteq V_{1}\times V_{2},G_{2}\subseteq V_{2}\times V_{3},G_{3}\subseteq V_{3}\times V_{1},
\]
 where each $V_{i}$ is $\left\{ 0,1\right\} ^{n}.$
\end{example}
As mentioned above, this example can be generalized to arbitrary hypergraphs.
In Section~\ref{sec:Mantel-proof}, we shall use this construction
to give a spectral proof of Mantel's theorem for graphs with $2^{n}$
vertices. In this context, Mantel's theorem can be restated as follows:
\begin{thm}
Let $G_{1},G_{2},G_{3}$ be cross-independent sets in $X^{\otimes n}.$
Suppose additionally that $G_{1},G_{2},G_{3}$ are all equal to some
bipartite graph $G\subseteq\left(\left\{ 0,1\right\} \times\left\{ 0,1\right\} \right)^{n}$,
and that $G$ corresponds to a graph in the sense that $\left(a,b\right)\in G$
if and only if $\left(b,a\right)\in G$ (in other words, $G$ is the
bipartite cover of some graph). Then the largest value of $\left|G\right|$
is attained when $G$ is the dictatorship of all $x\in\left(\left\{ 0,1\right\} \times\left\{ 0,1\right\} \right)^{n}$
whose first coordinate is either $\left(0,1\right)$ or $\left(1,0\right)$.
\end{thm}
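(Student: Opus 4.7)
The plan is to apply Theorem~\ref{thm:bound-intro} to an auxiliary non-partite $3$-uniform hypergraph $\tilde X$ obtained by symmetrizing the $3$-partite hypergraph $X$ of Example~\ref{exa:Mantel hypergraph}. Identify each $V_i$ with $V:=\{0,1\}^2$ and let $\tilde X=(V,\tilde\mu)$, where $\tilde\mu$ is the distribution on $V^{[3]}$ obtained by sampling $u,v,w\in\{0,1\}$ uniformly and independently and returning the multiset $[(v,w),(u,w),(u,v)]$. Under the identification its $8$ supported hyperedges correspond bijectively to the triangles of $K_{2,2,2}$.

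The first step is the reduction: a symmetric triple $(G,G,G)$ is cross-independent in $X^{\otimes n}$ if and only if $G$ is independent in $\tilde X^{\otimes n}$. A multiset of $V^n$ is a hyperedge of $\tilde X^{\otimes n}$ exactly when it admits a labelling $(w_1,w_2,w_3)$ whose $i$-th coordinate triple is an ordered triangle of the form $((v_i,w_i^*),(u_i,w_i^*),(u_i,v_i))$ for every $i$; this labelling is precisely a partite hyperedge of $X^{\otimes n}$, and the hypothesis $G_1=G_2=G_3=G$ matches the two forbidden configurations.

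Next I would compute the spectral data of $\tilde X$. The vertex measure $\mu_1$ is uniform on $V$, and $T_{\tilde X}$ is invariant under the involutions $(0,0)\leftrightarrow(1,1)$ and $(0,1)\leftrightarrow(1,0)$; diagonalising in the corresponding basis yields $\mathrm{spec}(T_{\tilde X})=\{1,\tfrac{1}{2},\tfrac{1}{6},0\}$, so $\lambda_0(\tilde X)=0$. For $\lambda_1$ there are two isomorphism classes of $1$-faces: the link of a diagonal vertex such as $(0,0)$ has normalised adjacency with smallest eigenvalue $\tfrac{1}{3}$, while the link of an off-diagonal vertex such as $(0,1)$ is a weighted $4$-cycle bipartite between $\{(0,0),(1,1)\}$ and $\{(0,1),(1,0)\}$, whose normalised adjacency has smallest eigenvalue $-1$. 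Hence $\lambda_1(\tilde X)=-1$.

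Since $\lambda_0\le 0$, the tensor clause of Theorem~\ref{thm:bound-intro} gives $\alpha(\tilde X^{\otimes n})\le 1-\frac{1}{(1-\lambda_0)(1-\lambda_1)}=\tfrac{1}{2}$, and because $\mu_1$ on $V^n$ is uniform this reads $|G|\le 2\cdot 4^{n-1}$. The dictatorship $G_{\mathrm{dict}}=\{x\in V^n:x_1\in\{(0,1),(1,0)\}\}$ is symmetric, has $|G_{\mathrm{dict}}|=2\cdot 4^{n-1}$, and is triangle-free because its graph interpretation is the complete bipartite graph between $\{a\in\{0,1\}^n:a_1=0\}$ and $\{a:a_1=1\}$; thus it attains the bound. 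The main obstacle I anticipate is the link calculation for the off-diagonal vertex: only after passing to the correct $\mu$-weighted normalisation is this link bipartite with smallest eigenvalue exactly $-1$, and it is precisely this that forces $(1-\lambda_0)(1-\lambda_1)=2$, making Theorem~\ref{thm:bound-intro} sharp at the Mantel value.
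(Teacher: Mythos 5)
Your spectral computations for the symmetrized hypergraph $\tilde X$ (uniform $\mu_1$, spectrum $\{1,\tfrac12,\tfrac16,0\}$, $\lambda_1=-1$) are correct, and the dictatorship indeed attains $2\cdot 4^{n-1}$; but the proof fails at its first step, the reduction. Under the paper's definition of the tensor product (the measure on multisets is coupled through independent uniformly random orderings in each coordinate), a multiset $[x,y,z]\subseteq V^n$ is a hyperedge of $\tilde X^{\otimes n}$ as soon as \emph{each coordinate multiset} $[x_i,y_i,z_i]$ is one of the eight triangle multisets --- no globally consistent labelling of the three elements by the roles $(v,w),(u,w),(u,v)$ is required, and for the six nondegenerate triangle multisets that labelling is in fact forced and may be forced \emph{differently} in different coordinates. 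Hence your claimed equivalence is false in the direction your argument needs: cross-independence of $(G,G,G)$ does not imply independence of $G$ in $\tilde X^{\otimes n}$. Concrete counterexample with $n=2$: let $G$ be the symmetrization of the star with center $00$ and leaves $01,10,11$ (triangle-free and loop-free, hence $(G,G,G)$ is cross-independent), and take $x=(00,11)$, $y=(10,00)$, $z=(01,00)\in G$; in coordinate $1$ the multiset is $[(0,1),(1,0),(0,0)]$ and in coordinate $2$ it is $[(0,1),(0,0),(1,0)]$, both triangle multisets, so $[x,y,z]$ is a hyperedge of $\tilde X^{\otimes 2}$ inside $G$ even though these three edges form a star, not a triangle. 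Consequently the bound $\alpha(\tilde X^{\otimes n})\le\tfrac12$ only constrains the strictly smaller class of $\tilde X^{\otimes n}$-independent sets and does not bound $|G|$ for all $G$ covered by the theorem. Restricting the hyperedges to the ``globally labelled'' multisets would fix the reduction but destroys the tensor-power structure, so the tensor clause of Theorem~\ref{thm:bound-intro} would no longer be applicable to that object.

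The paper avoids this trap by \emph{not} symmetrizing: it keeps the $3$-partite hypergraph $\mathcal{X}_{2^n}=\mathcal{X}_2^{\otimes n}$, where the role assignment is hard-wired into the parts, encodes the graph as $I=I_1\cup I_2\cup I_3$, and accepts that there $\lambda_0=-\tfrac12<0$ (a naive application of the bound would only give $\tfrac23$). The key step is then an orthogonality argument: since $I$ is invariant under the $S_3$-action permuting the parts and under the coordinatewise swap $\mathcal{S}=\mathcal{S}_0^{\otimes n}$, the indicator $1_I$ is orthogonal to every eigenspace with negative eigenvalue, so $\lambda_0$ may be replaced by $0$ in the bound, which together with $\lambda_1=-1$ gives the factor $2$ and Mantel's bound after accounting for the sixfold counting of edges. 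Your intuition that the ``effective'' $\lambda_0$ is $0$ because of the symmetry of $G$ is exactly right, but it has to be implemented by such an orthogonality argument inside the partite hypergraph, not by passing to a quotient hypergraph whose tensor powers acquire extra hyperedges.
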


Indeed, the dictatorship of all $x\in\left(\left\{ 0,1\right\} \times\left\{ 0,1\right\} \right)^{n}$
whose first coordinate is either $\left(0,1\right)$ or $\left(1,0\right)$
corresponds to a balanced complete bipartite graph, which is extremal
for Mantel's theorem. Interestingly enough, the dictatorships contain
only some of the complete balanced bipartite graphs, but not all of
them.

\subsection{Extremal set theory}

Many $p$-biased versions of problems in extremal set theory can be
described as a special case of the problem: ``How large can $\alpha\left(X^{\otimes n}\right)$
be?'', where $X$ is a weighted hypergraph. 

\subsubsection{Erd\H{o}s Matching Conjecture}
Our first example
is the Erd\H{o}s Matching Conjecture,~\cite{erdHos1965problem}, from
1965. An\emph{ $s$-matching} is a family of $s$ sets $\left\{ A_{1},\ldots,A_{s}\right\} $
that are pairwise disjoint. Erd\H{o}s asked how large can a family
$\mathcal{F}\subseteq\binom{\left[n\right]}{k}$ be if it does not
contain an $s$-matching. He conjectured that the extremal family
is either the family of all sets that intersect a given set of size
$s-1$ in at least one element, or the family $\binom{\left[ks-1\right]}{k}.$
The corresponding $p$-biased version of this problem is as follows:
\begin{quotation}
Given $p\le\frac{1}{s}$, how large can $\mu_{p}\left(\mathcal{F}\right)$
be if $\mathcal{F}$ does not contain an $s$-matching?
\end{quotation}
This is the problem of determining the independence number of the
$n$-th tensor power of the $s$-uniform hypergraph whose vertex set
is $\left\{ 0,1\right\} $ with the weight function $\mu([1,0,\dots,0])=sp$
and $\mu\left([0,\ldots,0]\right)=1-sp$ (recall that $[\cdot]$ is
our notation for multiset). A nice feature of the $p$-biased variant
of the Erd\H{o}s Matching Conjecture is that there is only one suggestion
for the extremal family, which is the family of all sets that intersect
a given set of size $s-1$ in at least one element.

\subsubsection{$s$-wise Intersecting Families}

The second example is the problem of $s$-wise intersecting families,
first studied by Frankl,~\cite{frankl1976sperner}. A family $\mathcal{F}\subseteq\binom{\left[n\right]}{k}$
is \emph{$s$-wise intersecting }if the intersection of every $s$
sets in $\mathcal{F}$ is nonempty. Frankl showed that when $k\le\tfrac{s-1}{s}n$,
the extremal $s$-wise intersecting family is the family of all sets
that contain a given element (otherwise every family is $s$-wise
intersecting). The $p$-biased version of the problem was studied
by Frankl and Tokushige,~\cite{frankl2003weighted}. They showed that
the largest value of $\mu_{p}\left(\mathcal{F}\right)$ for an $s$-wise
intersecting family $\mathcal{F}$ is $p$, as long as $p\leq\frac{s-1}{s}$.
This problem can be expressed as the determining the independence
number of the hypergraph $X^{\otimes n}$, where
\begin{enumerate}
\item The hypergraph $X=\left(V,\mu\right)$ has $\left\{ 0,1\right\} $
as its vertex set $V$.
\item The induced distribution $\mu_{1}$ on $V$ is the $p$-biased one.
\item $\mu\left(x\right)=0$ if and only $x$ is the all ones vector.
\end{enumerate}
It is easy to construct many hypergraphs $X$ that satisfy
these hypotheses. We reprove this result in \ref{subsec:Sharpness-of-tensor}.

\subsubsection{Frankl's Turán Problem}

Last but not least, this problem is related to Frankl's Turán problem
on hypergraphs without extended triangles. A \emph{triangle} in $\mathcal{P}\left(\left[n\right]\right)$
is a $2k$-uniform hypergraph $\left\{ A,B,C\right\} $ such that
each element of $\left[n\right]$ belongs to an even number of the
sets $A,B,C.$ In other words, there exist disjoint $k$-element sets
$D,E,F$ such that $D\cup E=A,$ $D\cup F=B,$ and $E\cup F=C.$ Frankl,~\cite{frankl1990asymptotic}, asked how large can a family $\mathcal{F}\subseteq\binom{\left[n\right]}{2k}$
be if it does not contain a triangle. The reason for considering only
even uniformities is that no $k$-uniform triangle exists for an odd
$k$. The $p$-biased version of the problem is as follows:
\begin{quotation}
Given $p\le\frac{2}{3}$, how large can $\mu_{p}\left(\mathcal{F}\right)$
be if $\mathcal{F}\subseteq\mathcal{P}\left(\left[n\right]\right)$
does not contain a triangle?
\end{quotation}
The reason for the condition $p\le\frac{2}{3}$ is the fact that
the family $\left\{ A:\,\left|A\right|>\frac{2}{3}n\right\} $ is
triangle-free, and its $p$-biased measure tends to $1$ as $n$ tends
to infinity.

The $p$-biased version of Frankl's Turán problem is the problem of
determining the independence number of the hypergraph $X^{\otimes n},$
where $X=\left(V,\mu\right)$ is with $V=\left\{ 0,1\right\} $
and
\[
\mu\left([1,1,0]\right)=\frac{3}{2}p,\:\mu\left([0,0,0]\right)=1-\frac{3}{2}p.
\]
In Section \ref{sec:Frankl-triangle} we prove the following theorem:
\begin{thm}
If a family of $2k$-subsets of $[n]$ contains no three distinct
subsets whose symmetric difference is empty, then the family contains
at most $\frac{2k}{\min(n,4k-1)}\binom{n}{2k}$ subsets. 

Furthermore, when $n\le4k-1$ and $p\ge1/2$ the bounds
are tight for ``dictatorships'' (all subsets or vectors containing
a specific point), and otherwise the bounds are asymptotically tight,
in the $p$-biased case for the family of all vectors having odd parity,
and in the $2k$-uniform case for the family of all subsets whose
intersection with $[\lfloor n/2\rfloor]$ is odd.

Similarly we prove that if a subset of $\{0,1\}^{n}$ contains no three distinct vectors summing to zero, then for all $p\le2/3$, its $\mu_{p}$-measure is at most $\max(p,1/2)$.
\end{thm}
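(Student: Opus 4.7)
The plan is to encode triangle-freeness as an independence condition in a $3$-uniform hypergraph and apply Theorem~\ref{thm:bound-intro}. For the $p$-biased statement with $1/2 \le p \le 2/3$, I would use the base hypergraph $X = (\{0,1\}, \mu)$ described at the end of Section~\ref{sec:Generalization-to-hypergraphs}, with $\mu([1,1,0]) = \tfrac{3}{2}p$ and $\mu([0,0,0]) = 1-\tfrac{3}{2}p$. The marginal $\mu_1$ is the $p$-biased Bernoulli measure, so the vertices of $X^{\otimes n}$ carry the product measure $\mu_p$ on $\{0,1\}^n$. A $3$-face of $X^{\otimes n}$ is a multiset of three vectors whose coordinate-wise triples of bits lie in $\{[1,1,0], [0,0,0]\}$, equivalently whose $\mathbb{F}_2$-sum is zero. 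Thus, up to the single vertex $0^n$ absorbed in the self-loop $[0^n,0^n,0^n]$, independent sets in $X^{\otimes n}$ correspond to families with no three distinct vectors summing to zero.

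The eigenvalue computation for $X$ itself is short. Directly from~\eqref{eq:norm-adj} and the trace identity $\mathrm{tr}(T_X) = 1 + \lambda_0$, one finds $\lambda_0 = \tfrac{1-2p}{2(1-p)}$, which is $\le 0$ exactly when $p \ge 1/2$. The link $X_{[1]}$ is supported on the single $2$-face $[1,0]$, a bipartite edge with eigenvalues $\pm 1$, while $X_{[0]}$ is supported on the self-loops $[0,0]$ and $[1,1]$, so its normalized adjacency is the identity with eigenvalue $+1$. Hence $\lambda_1 = -1$. A one-line computation gives $(1-\lambda_0)(1-\lambda_1) = \tfrac{1}{1-p}$, and Theorem~\ref{thm:bound-intro} yields
\[
\alpha(X^{\otimes n}) \le 1 - (1-p) = p,
\]
uniformly in $n$, since the hypothesis $\lambda_0 \le 0$ is precisely $p \ge 1/2$. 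Sharpness is witnessed by the dictatorship $\{v \in \{0,1\}^n : v_1 = 1\}$, which is triangle-free (three vectors whose first bit is $1$ have $\mathbb{F}_2$-sum with first bit $1\ne 0$) and has measure exactly $p$.

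For the uniform statement on $\binom{[n]}{2k}$ with $n \le 4k-1$, I would construct the parallel $3$-uniform hypergraph $Y$ directly on the vertex set $\binom{[n]}{2k}$, with $\mu_Y$ supported on the extended triangles $\{A,B,C\}$, and compute its eigenvalues via the decomposition of $L^2(\binom{[n]}{2k})$ under the $S_n$-action. Since $n$ is fixed, no tensor power is required here; the Hoffman bound applied to $Y$ should match the star bound $\binom{n-1}{2k-1}$. In the regime $n \ge 4k-1$ the bound drops to $\tfrac{2k}{4k-1}\binom{n}{2k}$ and the extremal family transitions to the parity family, which is asymptotically tight. The biased bound $\max(p,1/2)$ for $p < 1/2$ is obtained by combining the $p = 1/2$ case above with a coupling argument between $\mu_p$ and $\mu_{1/2}$.

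The main obstacle is the mild mismatch between the hypergraph notion of independence (which forbids $0^n$ because of the self-loop $3$-face $[0^n,0^n,0^n]$) and the combinatorial notion of triangle-freeness (which permits $0^n$, as a single vector cannot provide three distinct vectors). Left unaddressed this introduces an additive $(1-p)^n$ slack, because the family $\{v:v_1 = 1\}\cup\{0^n\}$ is combinatorially triangle-free but not independent in $X^{\otimes n}$. Closing this gap to obtain the exact bound $p$ requires either rebalancing $\mu$ to annihilate the $[0,0,0]$ loop (which disturbs the eigenvalue structure and must be tracked carefully), or a short supplementary argument ruling out the coexistence of $0^n$ with a near-extremal triangle-free family.
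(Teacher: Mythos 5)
Your treatment of the central case ($p$-biased, $\tfrac12\le p\le\tfrac23$) is exactly the paper's: the same two-vertex hypergraph, the same eigenvalues $\lambda_0=\tfrac{1-2p}{2(1-p)}$ and $\lambda_1=-1$, and the bound $p$ via Theorem~\ref{thm:bound-intro}. The other three pieces of the statement, however, are not actually proved in your proposal. First, in the uniform case you only assert that an $S_n$-decomposition ``should match'' the star bound; the needed input is the minimum eigenvalue of the skeleton, which is the generalized Johnson graph $J(n,2k,k)$, and the paper imports it from Brouwer et al.~\cite{brouwer2018smallest}: $\lambda_0=\tfrac{n-4k}{2(n-2k)}$ for $n\le 4k-1$, together with $\lambda_1=-1$ because the link of a vertex $A$ is the perfect matching $B\leftrightarrow A\triangle B$; only with these values does Theorem~\ref{thm:Generalized-Hoffman-bound} give $\binom{n-1}{2k-1}$. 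Second, for $n\ge 4k$ you give no argument at all; the paper does not apply the spectral bound there but uses an averaging step: pick a uniformly random $(4k-1)$-set $S\subseteq[n]$ and a uniformly random $2k$-subset of $S$; each restriction of $\mathcal F$ to $S$ is still triangle-free, hence has density at most $\tfrac{2k}{4k-1}$ inside $S$, and the average of these densities is the density of $\mathcal F$, giving $\tfrac{2k}{4k-1}\binom n{2k}$. Third, for $p<\tfrac12$ your ``coupling between $\mu_p$ and $\mu_{1/2}$'' cannot work as stated, since triangle-free families are not monotone (already $\mu_p(\{0^n\})>\mu_{1/2}(\{0^n\})$), so no stochastic domination is available; the paper instead applies the same bound directly to $X^{\otimes n}$, using $\lambda_0(X^{\otimes n})=\lambda_0(X)^n\ge 0$ and $\lambda_1(X^{\otimes n})=-1$, so that $(1-\lambda_0)(1-\lambda_1)\le 2$ and $\alpha(X^{\otimes n})\le\tfrac12$.

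The $0^n$ mismatch you flag is a correct and worthwhile observation, but your proposed repair is not viable: no supplementary argument can ``rule out the coexistence of $0^n$ with a near-extremal family,'' because your own example $\{v:v_1=1\}\cup\{0^n\}$ contains no three \emph{distinct} vectors summing to zero and has measure $p+(1-p)^n$. What the spectral argument bounds is the measure of families containing no multiset triangle $[u,v,w]$ with $u\oplus v\oplus w=0$, which is the same as being triangle-free and excluding $0^n$; for the literal ``three distinct vectors'' formulation one only gets $p+(1-p)^n$ (resp.\ $\tfrac12+(1-p)^n$), an additive error that vanishes as $n\to\infty$ and that the paper's own statement silently elides. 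So on this point your criticism applies to the paper as much as to your write-up; the honest fix is to state the result for multiset triangles (or add the $(1-p)^n$ term), not to promise an argument your counterexample excludes. In the uniform setting no such issue arises, since $A=B$ would force $C=\emptyset\notin\binom{[n]}{2k}$, so distinctness is automatic there.
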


\section{High dimensional Hoffman bound\label{sec:High-dimensional-Hoffman}}

Suppose we are given a problem in extremal set theory where constraints
on more than two elements are involved. A possible strategy for solving
it is to first incorporate families $\mathcal{F}$ that satisfy the
constraint as independent sets in some hypergraph or simplical complex,
and then to find and apply a high-dimensional generalization of the
Hoffman bound in order to bound the size of $\mathcal{F}$. Two such
generalizations of the Hoffman bound were obtained recently by Golubev,~\cite{golubev2016chromatic}, and by Bachoc, Gundert, and Passuello,~\cite{bachoc2019theta}. However, none of them seem to give sharp
results in our problems of interest. Instead, we develop a different
generalization of the Hoffman bound in the spirit of \cite{golubev2016chromatic}.

Let $X=(V,\mu)$ be a $k$-uniform hypergraph on the vertex set $V$.
Recall (see Subsection \ref{subsec:Notations}) that for $0\leq i\leq k-2$
we denote by $\lambda_{i}\left(X\right)$ the minimal possible value
of an eigenvalue of the normalized adjacency matrix of the skeleton
of the link of an $i$-face of $X$. That is,
\[
\lambda_{i}\left(X\right)=\min_{\sigma\in X^{\left(i\right)}}\left[\lambda\left(S\left(X_{\sigma}\right)\right)\right].
\]
Note that the hypergraph itself is the link of the only $0$-face,
the empty set, and hence $\lambda_{0}(X)$ is just the smallest eigenvalue
the normalized adjacency operator on the skeleton of $X$.

\subsubsection*{Example}

Let $X=\left(\left\{ 1,2\right\} ,\mu\right)$ be a graph (in other
words, $2$-uniform hypergraph) on two vertices $\left\{ 1,2\right\} $
with the probability measure $\mu$ defined on the edges as 
\[
\mu\left(\left[1,1\right]\right)=p_{1},\:\mu\left(\left[1,2\right]\right)=p_{2},\:\mu\left(\left[2,2\right]\right)=p_{3}.
\]
Then the induced distribution $\mu_{1}$ on the vertices is as follows:
\[
\mu_{1}(1)=p_{1}+\frac{1}{2}p_{2},\:\mu_{1}(2)=\frac{1}{2}p_{2}+p_{3}.
\]
The normalized adjacency operator $T_{X}$ on the skeleton of $X$
has the matrix form
\[
T_{X}=\left(\begin{array}{cc}
\frac{p_{1}}{p_{1}+\frac{1}{2}p_{2}} & \frac{\frac{1}{2}p_{2}}{p_{1}+\frac{1}{2}p_{2}}\\
\frac{\frac{1}{2}p_{2}}{\frac{1}{2}p_{2}+p_{3}} & \frac{p_{3}}{\frac{1}{2}p_{2}+p_{3}}
\end{array}\right)
\]
and while its largest eigenvalue is equal to $1$, the smallest one
is equal to
\[
\lambda_{0}(X)=1-\frac{2p_{2}}{1-(p_{1}-p_{3})^{2}}.
\]

\begin{thm}
\label{thm:Generalized-Hoffman-bound} Let $X=\left(V,\mu\right)$
be a $k$-uniform hypergraph. Then 
\begin{equation}
\alpha\left(X\right)\le1-\frac{1}{\left(1-\lambda_{0}\right)\left(1-\lambda_{1}\right)\cdots\left(1-\lambda_{k-2}\right)}.\label{eq:generalized Hoffman bound}
\end{equation}
\end{thm}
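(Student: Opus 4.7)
The plan is to prove the bound by induction on the uniformity $k$. The base case $k=2$ is exactly the classical Hoffman bound applied to the weighted graph $X$, which coincides with its own skeleton: with the convention that the empty product equals $1$, the right-hand side of the theorem reduces to $1 - \tfrac{1}{1-\lambda_0} = \tfrac{-\lambda_0}{1-\lambda_0}$, which is the familiar Hoffman ratio.

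For the inductive step with $k \ge 3$, I would fix an independent set $I \subseteq V$ of maximum $\mu_1$-measure $\alpha := \alpha(X)$. The crucial observation is that for every $v \in I$, the same set $I$ is independent in the $(k-1)$-uniform link $X_v$: a $(k-1)$-face $\tau$ of $X_v$ contained in $I$ would, together with $v$, produce a $k$-face $\{v\}\cup\tau \subseteq I$ of $X$, contradicting the independence of $I$. The inductive hypothesis applied to $X_v$ therefore yields
\[
\mu_1(X_v)(I) \le 1 - \prod_{i=0}^{k-3}\frac{1}{1 - \lambda_i(X_v)}.
\]
I would then verify the monotonicity $\lambda_i(X_v) \ge \lambda_{i+1}(X)$, which follows immediately from the compatibility $(X_v)_\sigma = X_{\{v\}\cup\sigma}$: the minimum defining $\lambda_i(X_v)$ ranges over a subfamily (the $(i+1)$-faces of $X$ containing $v$) of the family defining $\lambda_{i+1}(X)$. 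Substituting gives the uniform bound
\[
\mu_1(X_v)(I) \le C := 1 - \prod_{j=1}^{k-2} \frac{1}{1-\lambda_j(X)} \qquad \text{for every } v \in I.
\]

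To close the induction I would combine this link-level estimate with the Rayleigh-quotient argument applied to the skeleton $S(X)$. Using the sampling identity for $\tilde{\mu}_2$ (first sample $u\sim\mu_1$, then the second vertex from $\mu_1(X_u)$), the quadratic form rewrites as
\[
\langle 1_I, T_X 1_I\rangle = \mathbb{E}_{u\sim\mu_1}\bigl[1_I(u)\,\mu_1(X_u)(I)\bigr] \le C\cdot\alpha.
\]
On the other hand, decomposing $1_I = \alpha \cdot 1 + \phi$ with $\phi \perp 1$ and using $\langle\phi, T_X\phi\rangle \ge \lambda_0\|\phi\|^2 = \lambda_0(\alpha - \alpha^2)$ gives
\[
\langle 1_I, T_X 1_I\rangle = \alpha^2 + \langle \phi, T_X\phi\rangle \ge \lambda_0\alpha + (1-\lambda_0)\alpha^2.
\]
Combining and dividing by $\alpha$ yields $(1-\lambda_0)\alpha \le C - \lambda_0$, and a one-line algebraic rearrangement recovers exactly $\alpha \le 1 - \prod_{i=0}^{k-2}(1-\lambda_i)^{-1}$.

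The single step I expect to require the most care is the compatibility between the link operation and the list of spectral invariants $(\lambda_0,\ldots,\lambda_{k-2})$: the inductive formula for $X_v$ involves $\lambda_0(X_v),\ldots,\lambda_{k-3}(X_v)$, and the telescoping to produce the full product in $X$ works only because each $\lambda_i$ of a link dominates the corresponding $\lambda_{i+1}$ of the ambient hypergraph. One should also keep a light eye on the sign of the factors $1-\lambda_i$ (positive for any nontrivial skeleton, since $1$ is the top eigenvalue of the normalized adjacency operator) to be sure the divisions preserve the direction of the inequalities. Once this bookkeeping is carried out, the rest of the argument is a clean reduction of the high-dimensional Hoffman bound to iterated applications of the classical one on the skeletons of links.
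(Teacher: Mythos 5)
Your proposal is correct and follows essentially the same route as the paper: induction on the uniformity, with the inductive step combining the Rayleigh-quotient lower bound on $\langle 1_I, T_X 1_I\rangle$ (projection onto constants plus $\lambda_0$ for the orthogonal part) with the upper bound $\langle 1_I, T_X 1_I\rangle \le \alpha\cdot\max_{v\in I}\mu_1(X_v)(I)$ and the induction hypothesis applied to the links. Your explicit verification of $\lambda_i(X_v)\ge\lambda_{i+1}(X)$ via $(X_v)_\sigma = X_{\{v\}\cup\sigma}$ is a detail the paper leaves implicit, but it is the same argument.
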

\iffalse
We shall also discuss the problem of determining the extremal independent
in the case where equality hold in (\ref{eq:generalized Hoffman bound}).\fi
\begin{proof}
The proof goes by induction on the uniformity of the hypergraph. The
base case, $k=2$, is the graph case, and the bound \ref{eq:generalized Hoffman bound}
reads as the classical Hoffman bound. Assume that the bound holds
for $(k-1)$-uniform hypergraphs. Let $T_{X}$ be the normalized adjacency
operator of the skeleton of $X$, and let $v_{1}=1,v_{2},\ldots,v_{m}$
an orthonormal basis of its eigenvectors with eigenvalues $1\ge l_{2}\ge\cdots\ge l_{m}=\lambda_{0}$
(recall that $T_{X}$ is self-adjoint). Let $I$ be an independent
set of measure $\alpha(X)$, and let $f=1_{I}$ be its indicator function.
We may write 
\[
f=\sum_{i=1}^{m}\left\langle f,v_{i}\right\rangle v_{i}.
\]
 On the one hand, 
\[
\left\langle T_{X}f,f\right\rangle =\text{Pr}_{[\boldsymbol{x},\boldsymbol{y}]\sim\mu_{2}}\left[\boldsymbol{x},\boldsymbol{y}\in I\right],
\]
or in other words, it is equal to the probability of an ordered edge
($1$-face) distributed according to $\mu_{2}$ to have both ends
in $I$. On the other hand, 
\begin{align*}
\left\langle T_{X}f,f\right\rangle  & =\sum_{i=1}^{m}l_{i}\left\langle f,v_{i}\right\rangle ^{2}\\
 & \ge\left\langle f,1\right\rangle ^{2}+\sum_{i=2}^{m}\lambda_{0}\left\langle f,v_{i}\right\rangle ^{2}\\
 & =\left\langle f,1\right\rangle ^{2}\left(1-\lambda_{0}\right)+\lambda_{0}\left\langle f,f\right\rangle \\
 & \mathbb{=E}\left[f\right]^{2}\left(1-\lambda_{0}\right)+\lambda_{0}\mathbb{E}\left[f^{2}\right].
\end{align*}
Since $f$ is an indicator function, $\mathbb{E}[f]=\mathbb{E}[f^{2}]=\alpha(X)$,
and so
\[
\text{Pr}_{[\boldsymbol{x},\boldsymbol{y}]\sim\mu_{2}}\left[\boldsymbol{x},\boldsymbol{y}\in I\right]\geq\alpha(X)^{2}\left(1-\lambda_{0}\right)+\lambda_{0}\alpha(X).
\]
Note that 
\[
\text{Pr}_{[\boldsymbol{x},\boldsymbol{y}]\sim\mu_{2}}\left[\boldsymbol{x},\boldsymbol{y}\in I\right]\leq\alpha(X)\cdot\max_{x\in I}\text{Pr}_{\boldsymbol{y}\sim\mu_{1}(X_{x})}\left[\boldsymbol{y}\in I\right]
\]
and that for a fixed vertex $x$, the probability $\text{Pr}_{y\sim\mu_{1}(X_{x})}\left[\boldsymbol{y}\in I\right]$
is the measure of an independent set of vertices in its link $X_{x}$,
which is a $(k-1)$-uniform hypergraph. By the induction assumption,
\[
\text{Pr}_{y\sim\mu_{1}(X_{x})}\left[\boldsymbol{y}\in I\right]\leq1-\frac{1}{\left(1-\lambda_{1}\right)\cdots\left(1-\lambda_{k-2}\right)}.
\]
Combining the above bounds, we deduce that

\[
(1-\lambda_{0})\alpha(X)+\lambda_{0}\leq\text{Pr}_{y\sim\mu_{1}(X_{x})}\left[\boldsymbol{y}\in I\right]\leq1-\frac{1}{\left(1-\lambda_{1}\right)\cdots\left(1-\lambda_{k-2}\right)},
\]
which proves the required bound after rearrangement.
\end{proof}
\iffalse
\begin{example}
If $X$ is the complete $k$-uniform hypergraph on $n$ vertices (equivalently,
the complete $(k-1)$-dimensional simplicial complex) equipped with
the uniform measure on ${[n] \choose k}$, then $X^{\left(i\right)}$
consists of the element $\left[v_{1},\ldots,v_{i}\right]$, where
the $v_{j}$ are distinct, and the graph $S\left(X_{\sigma}\right)$
is the complete graphs on the vertices $\left[n\right]\backslash\left\{ v_{1},\ldots,v_{i}\right\} $
equipped with the uniform distribution. Since all skeletons of links
are complete graphs, we easily compute $\lambda_{i}=-\frac{1}{n-1-i}$,
and so the generalized Hoffman bound gives the sharp bound

\[
\alpha(X)\leq1-\frac{1}{\frac{n}{n-1}\cdot\frac{n-1}{n-2}\cdot\cdots\cdot\frac{n-k+2}{n-k+1}}=1-\frac{n-k+1}{n}=\frac{k-1}{n}.
\]
\end{example}
\fi

\subsection{Comparison to prior work}

There are two known spectral upper bounds on the independence number
of a hypergraph (equivalently, a simplicial complex): one in \cite{golubev2016chromatic},
to which we will refer as \emph{the Laplacian bound}, and one in \cite{bachoc2019theta},
to which we will refer as \emph{the Theta bound}. To the bound in
Theorem \ref{thm:Generalized-Hoffman-bound} we will refer as \emph{the
Link bound}.

The Link bound and the Laplacian bound are based on the same idea.
Namely, the bound on the size of an independent set is obtained via
a combination of a lower and an upper bound on the number of $2$-faces
between the maximal independent set and its complement. Below we provide
an explanation why the Link bound is always as least as good as the
Laplacian bound.

The Theta bound follows a different approach. In \cite{bachoc2019theta},
the authors show that the Theta bound is incomparable to the Laplacian
bound by providing two families of simplicial complexes, on which one
bound is sharp while the other is not, and vice versa. However, the
new bound is sharp for all examples provided in \cite{bachoc2019theta}.
In other words, the Link bound is better than the Theta bound in at least one case, and they are equal in other cases. It is not clear whether these bounds are comparable or not.

In order to compare the Laplacian bound with the Link one, let us reformulate the former in the terms of the normalized Laplacian (see, e.g.,~\cite{horak2013spectra} for the definition of the normalized Laplacian). Let $X$ be a $k$-uniform
hypergraph. Then
\[
\alpha\left(X\right)\le1-\frac{1}{\mu_{0}\dots\mu_{k-2}},
\]
where $\mu_{i}$ is the largest eigenvalue of the normalized $i$-Laplacian
on $X$. In order to show that the Link bound is at least as good as
the Laplacian one, it suffices to show that $(1-\lambda_{i})\leq\mu_{i}$
for all $i=0,\dots,k-2$. For graphs, the normalized Laplace operator
$\Delta$ and the normalized adjacency operator $T$ satisfy $\Delta=Id-T$,
and hence $\lambda_{0}=1-\mu_{0}$. For $0 < i$, the Laplace
bound exploits the normalized Laplace operators on the whole hypergraph,
while the Link bound takes the minimum over the smallest eigenvalues
of the adjacency operators on the links. For $2\leq i\leq k-2$,
there exists a function $f_{i}$ supported on the vertices of the
link of an $(i-1)$-cell of the complex $X$ and of norm $1$ which
is an eigenfunction of the normalized adjacency operator on the link
with eigenvalue $\lambda_{i}$. Given such $f_{i}$, one can define
an $i$-cochain $\widehat{f_{i}}$ on $X$ supported in the star of
an $(i-1)$-cell, such that $\langle\Delta\widehat{f_{i}},\widehat{f_{i}}\rangle=1-\lambda_{i}$. Since $\mu_{i}$ is the largest eigenvalue of
the Laplacian and $\widehat{f_{i}}$ is of norm $1$, $\mu_{i}\geq1-\lambda_{i}$.

\subsection{Sharpness of Hoffman bound on $X$ implies sharpness in $X^{\otimes n}$\label{subsec:Sharpness-of-tensor}}

The goal of this section is to prove that the bound~\ref{eq:generalized Hoffman bound} remains sharp for tensor powers of a hypergraph if it is sharp for the hypergraph itself given all the minimal eigenvalues are negative.
First recall the following definition.
\begin{defn}
The tensor product $X\otimes X'$ of two $k$-uniform hypergraphs
$X=\left(V,\mu\right)$ and $X'=\left(V',\mu'\right)$
is a $k$-uniform hypergraph $(V\times V',\mu\times\mu')$,
where $\mu\times\mu'$ stands for the product measure on $(V\times V')^{[k]}\simeq V^{[k]}\times V'^{[k]}$.
For a $k$-uniform hypergraph $X$, we denote by $X^{\otimes n}=\underbrace{X\otimes\dots\otimes X}_{n}$
its $n$-th tensor power. 
\end{defn}

\begin{prop}
\label{prop:ten-prod-evs}Let $X=X\otimes X'$ be the tensor
product of $k$-uniform hypergraphs $X$ and $X'$. Then for
all $0\leq i<k-2$, the following holds for the smallest eigenvalues
of the normalized adjacency operator on the links of its $i$-faces:
\[
\lambda_{i}(X)
=\min_{\sigma\in X^{\left(i-1\right)}}\left[\lambda\left(S\left(X_{\sigma}\right)\right)\right]
=\begin{cases}
\lambda_{i}\left(X\right)\lambda_{i}\left(X'\right), & \text{if }\lambda_{i}\left(X_{j}\right)\geq0\text{ for }j=1,2;\\
\min\left\{ \lambda_{i}\left(X\right),\lambda_{i}\left(X'\right)\right\} , & \text{otherwise}.
\end{cases}
\]
In particular, for the tensor power it reads as
\[
\lambda_{i}(X^{\otimes n})
=\begin{cases}
\lambda_{i}\left(X\right), & \text{if }\lambda_{i}\left(X\right)\le0;\\
\lambda_{i}\left(X\right)^{n}, & \text{if }\lambda_{i}\left(X\right)\ge0.
\end{cases}
\]
\end{prop}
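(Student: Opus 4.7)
The plan is to reduce the claim to a Kronecker-product eigenvalue computation on skeletons of links. First, I would argue that an $i$-face of $X\otimes X'$ is canonically a pair $(\sigma,\sigma')\in X^{(i)}\times X'^{(i)}$, using the identification $(V\times V')^{[k]}\simeq V^{[k]}\times V'^{[k]}$ and the product form of $\mu\times\mu'$ (which makes $\mu_{i}^{X\otimes X'}$ the product of $\mu_{i}^{X}$ and $\mu_{i}^{X'}$). Conditioning the joint flag distribution on this $i$-face factorizes, so the link identifies as
\[
(X\otimes X')_{(\sigma,\sigma')}\;=\;X_{\sigma}\otimes X'_{\sigma'},
\]
a $(k-i)$-uniform hypergraph.

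Second, I would establish the key identity $T_{Y\otimes Y'}=T_{Y}\otimes T_{Y'}$ for the normalized adjacency operator of the skeleton of a product of $k$-uniform hypergraphs. Using the formula $\langle f,T_{Y}g\rangle=\mathbb{E}_{(\boldsymbol u,\boldsymbol v)\sim\tilde\mu_{2}}f(\boldsymbol v)g(\boldsymbol u)$ from the preliminaries, the identity reduces to showing that an ordered skeleton edge of $Y\otimes Y'$ (sample a $k$-face from $\mu\times\mu'$, draw a random $2$-submultiset, then pick an ordering) has the same joint law as independently sampling ordered skeleton edges of $Y$ and $Y'$. This is immediate from the product form of $\mu\times\mu'$ together with the fact that the choice of which two coordinates to retain and in which order can be coupled once and applied to both factors. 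Testing against separable functions $f\otimes g$ yields the operator identity.

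Third, applied to the link, step two gives $T_{(X\otimes X')_{(\sigma,\sigma')}}=T_{X_{\sigma}}\otimes T_{X'_{\sigma'}}$, so the spectrum is the set of products $\{\alpha\beta:\alpha\in\mathrm{Spec}\,T_{X_{\sigma}},\beta\in\mathrm{Spec}\,T_{X'_{\sigma'}}\}$. Each factor spectrum lies in $[-1,1]$ and contains $1$. Writing $a=\lambda(S(X_{\sigma}))$ and $b=\lambda(S(X'_{\sigma'}))$, the smallest product-eigenvalue equals $ab$ when $a,b\ge 0$, and equals $\min(a,b)$ otherwise: if $a<0$ then $a\cdot 1=a$ already appears, and any other product $\alpha\beta$ satisfies $\alpha\beta\ge a$ because $\alpha\ge a$ and $|\beta|\le 1$; the same argument handles $b<0$.

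Finally, minimizing over $(\sigma,\sigma')$ gives the two-factor formula. If $\lambda_{i}(X),\lambda_{i}(X')\ge 0$, then every pair has $a,b\ge 0$ and $\min ab=\lambda_{i}(X)\lambda_{i}(X')$ by taking each factor at its minimizer. If at least one is negative, then choosing $\sigma,\sigma'$ attaining the respective minima realizes $\min(\lambda_{i}(X),\lambda_{i}(X'))$, and no pair can beat this by the monotonicity above. The tensor power statement follows by induction on $n$ from the two-factor case: if $\lambda_{i}(X)\le 0$, the value is preserved at every step; if $\lambda_{i}(X)\ge 0$, it multiplies by $\lambda_{i}(X)$ at every step, since $\lambda_{i}(X^{\otimes(n-1)})$ and $\lambda_{i}(X)$ are then both nonnegative. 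The one step that truly requires care is the Kronecker factorization $T_{Y\otimes Y'}=T_{Y}\otimes T_{Y'}$ for $k>2$, where one must verify that the induced $\tilde\mu_{2}$ on $V\times V'$ is genuinely a product measure; everything else is a routine application of Kronecker eigenvalue calculus.
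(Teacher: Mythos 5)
Your proposal is correct and follows essentially the same route as the paper: identify the link of a face of the product with the tensor product of the links, use that the normalized adjacency of the skeleton of a product is the Kronecker product of those of the factors, and then read off the minimum of the product spectrum using that all eigenvalues lie in $[-1,1]$ with $1$ attained. You simply spell out more explicitly the points the paper leaves terse (the product form of $\tilde\mu_{2}$ via a common coupling of the submultiset choice, the corner analysis for the minimal product eigenvalue, and the induction for the $n$-th power), which is fine but not a different proof.
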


\begin{proof}
The proposition is a combination of the following two facts. The first
is that for an $i$-face $\sigma=(\sigma_{1},\sigma_{2})\in X^{(i)}$,
its link is the tensor product of the links, i.e.,$X_{\sigma}=X_{\sigma_{1}}\otimes X_{\sigma_{2}}$.
The second one is that the matrix of the normalized adjacency operator
of the tensor product of two graphs is the Kronecker product of the
corresponding matrices of the factors. The result, which dates back
to Kronecker himself, states that the eigenvalues of the Kronecker
product are exactly the products of the eigenvalues of the factors (see~\cite{weichsel1962kronecker} for the proof in the uniform case). Finally, since $T_{X}$ is a Markov matrix, all of its
eigenvalues are bounded in magnitude by $1$, which implies the stated
formula.
\end{proof}
\begin{thm}
Let $X=(V,\mu)$ be a $k$-uniform hypergraph such that $\lambda_{i}\leq0$
for all $0\leq i<k-2$ and such that the bound (\ref{eq:generalized Hoffman bound})
is sharp for it, i.e.,
\[
\alpha(X)=1-\frac{1}{\left(1-\lambda_{0}\right)\left(1-\lambda_{1}\right)\cdots\left(1-\lambda_{k-2}\right)},
\]
Then it is also sharp for $X^{\otimes n}$ for any positive integer
$n$, and 
\[
\alpha(X^{\otimes n})=1-\frac{1}{\left(1-\lambda_{0}\right)\left(1-\lambda_{1}\right)\cdots\left(1-\lambda_{k-2}\right)}.
\]
\end{thm}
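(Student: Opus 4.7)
The plan is to sandwich $\alpha(X^{\otimes n})$ between the Link bound applied directly to $X^{\otimes n}$ and the measure of a dictatorship-type independent set lifted from an extremal independent set of $X$. Both directions will be essentially corollaries of the two preceding results, Theorem~\ref{thm:Generalized-Hoffman-bound} and Proposition~\ref{prop:ten-prod-evs}, together with a short explicit construction.

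For the upper bound I would apply Theorem~\ref{thm:Generalized-Hoffman-bound} directly to the $k$-uniform hypergraph $X^{\otimes n}$, obtaining
\[
\alpha(X^{\otimes n})\le 1-\frac{1}{\prod_{i=0}^{k-2}\bigl(1-\lambda_{i}(X^{\otimes n})\bigr)}.
\]
The assumption $\lambda_{i}(X)\le 0$ then activates precisely the first branch of the ``in particular'' clause of Proposition~\ref{prop:ten-prod-evs}, giving $\lambda_{i}(X^{\otimes n})=\lambda_{i}(X)$ for every relevant $i$. Substituting and invoking the assumed sharpness of the Link bound on $X$ yields the upper bound $\alpha(X^{\otimes n})\le \alpha(X)$.

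For the matching lower bound I would use a dictatorship construction. Let $I\subseteq V$ be an extremal independent set of $X$, so $\mu_{1}(I)=\alpha(X)$, and set $\widetilde{I}:=I\times V^{n-1}\subseteq V^{n}$. Under the identification $(V\times V')^{[k]}\simeq V^{[k]}\times V'^{[k]}$ used in the definition of the tensor product measure, a $k$-face of $X^{\otimes n}$ corresponds to an $n$-tuple $(\sigma^{(1)},\dots,\sigma^{(n)})$ of $k$-faces of $X$. If every vertex of this face lay in $\widetilde{I}$, then every vertex of $\sigma^{(1)}$ would lie in $I$, contradicting the independence of $I$. Hence $\widetilde{I}$ is independent in $X^{\otimes n}$, and since the induced vertex distribution of $X^{\otimes n}$ is the product $\mu_{1}^{\otimes n}$, its measure is $\mu_{1}(I)=\alpha(X)$. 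Together with the upper bound, this gives the required equality.

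The main obstacle is essentially bookkeeping rather than mathematical depth: one has to be careful with the identification of $k$-multisets in a product with pairs of $k$-multisets in the factors in order to make the dictatorship argument rigorous, and one should note that the hypothesis $\lambda_{i}(X)\le 0$ is preserved by the $\min$ branch of Proposition~\ref{prop:ten-prod-evs}, so that iterating from the two-factor version to the $n$-fold version of the eigenvalue formula causes no trouble. No new idea is needed beyond what the two preceding results already provide.
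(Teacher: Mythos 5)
Your proposal is correct and follows essentially the same route as the paper: the upper bound comes from applying the Link bound to $X^{\otimes n}$ together with Proposition~\ref{prop:ten-prod-evs} (using $\lambda_i\le 0$ to keep the eigenvalues, and hence the right-hand side, unchanged), and the lower bound comes from the same dictatorship construction $I\times V^{n-1}$, which is exactly the set $(I,V,\dots,V)$ used in the paper. The only difference is that you spell out the independence and measure bookkeeping that the paper leaves implicit.
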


\begin{proof}
It is a direct corollary of Proposition \ref{prop:ten-prod-evs} that
the r.h.s. of the bound (\ref{eq:generalized Hoffman bound}) is the
same for $X^{\otimes n}$ as for $X$. In order to show that it is
sharp, note that if $I\subseteq V$ is a maximal independent set in
$X$, that is $\mu(I)=\alpha(X)$, then the set $(I,V,\dots,V)\subseteq V^{n}$
is independent in $X^{\otimes n}$.
\end{proof}

\subsection{Computing the generalized Hoffman bound}

Given a $k$-uniform hypergraph $X$ on the vertex set $V$
and a distribution $\nu$ on $V$, the generalized Hoffman bound gives
an upper bound on the $\nu$-measure of an independent set of $X$
for each $k$-uniform weighted hypergraph $X=(V,\mu)$ whose weight
function satisfies the following two constraints: $\mu_{1}=\nu$ and
$\mu(x)=0$ whenever $x\notin X$. We can formulate the best
bound obtainable in this way as a problem whose variables
are the entries of $\mu$:
\[
\begin{aligned}\min\: & (1-\lambda_{0})\cdots(1-\lambda_{k-2})\\
s.t.\: & T_{X_{s}}\succeq\lambda_{|s|}\mathrm{Id}\\
 & \mu_{1}=\nu\\
 & \mu(x)=0\;\forall x\notin X\\
 & \mu\geq0
\end{aligned}
\]
In this program, $s$ goes over all possible faces, $T_{X_{s}}\succeq\lambda_{|s|}\mathrm{Id}$
means that $T_{X_{s}}-\lambda_{|s|}\mathrm{Id}$ is positive semidefinite,
and $\mu\geq0$ means that all entries of $\mu$are nonnegative. If
a solution to the program has objective value $\beta$, then this
gives a bound of $1-1/\beta$ on the $\alpha$-measure of an independent
set of $ X$.

Since the maximal eigenvalue of $T_{X_{s}}$ is always $1$, we can
rephrase the constraint $T_{X_{s}}\succeq\lambda_{|s|}\mathrm{Id}$
equivalently as follows: the spectral radius of $\mathrm{Id}-T_{X_{s}}$
is at most $1-\lambda_{|s|}$. Using Schur complements, this is easily
seen to be equivalent to the semidefinite constraint 
\[
\left(\begin{array}{cc}
(1-\lambda_{|s|})\mathrm{Id} & \mathrm{Id}-T_{X_{s}}\\
\mathrm{Id}-T_{X_{s}}^{T} & (1-\lambda_{|s|})\mathrm{Id}
\end{array}\right)\succeq0.
\]
Making $1-\lambda_{|s|}$ a variable, we have expressed the problem
of finding the best generalized Hoffman bound as minimizing a semidefinite
program whose objective value is a product of $k-1$ variables. When
$k=2$, this is just a semidefinite program, which can be solved efficiently;
up to the nonnegativity constraint $\mu\geq0$, we have recovered
the Lovász $\theta$ function. When $k>2$, the objective function
is no longer convex, and it is not clear how to solve the program
efficiently.

\section{Frankl's problem on extended triangles\label{sec:Frankl-triangle}}

\subsection{The uniform version}
Frankl's Turán problem on hypergraphs without extended triangles reads
as follows.  A \emph{triangle} in $\mathcal{P}\left(\left[n\right]\right)$, the power set on $[n]$, is a $2k$-uniform hypergraph supported on three sets $\left\{ A,B,C\right\} $ such that
each element of $\left[n\right]$ belongs to an even number of the
sets $A,B,C.$ In other words, there exist disjoint $k$-element sets
$D,E,F$ such that $D\cup E=A,$ $D\cup F=B,$ and $E\cup F=C.$ Frankl,~\cite{frankl1990asymptotic}, asked how large can a family $\mathcal{F}\subseteq\binom{\left[n\right]}{2k}$
be if it does not contain a triangle. The reason for considering only
even uniformities is that no $k$-uniform triangle exists for an odd
$k$. Equivalently, we are interested in the maximum independent set
in the 3-uniform hypergraph $ X$ whose vertices are the
$2k$-subsets of $[n]$ and whose hyperedges are triangles.

The skeleton of $ X$ is the graph on the same set of vertices
whose edges are pairs of subsets whose intersection has size exactly
$k$. This graph is also known as the generalized Johnson graph $J(n,2k,k)$.
Brouwer et al.,~\cite{brouwer2018smallest}, showed that when $n\le4k-1$,
the minimum eigenvalue is
\[
\lambda_{0}=\frac{n-4k}{2(n-2k)}=1-\frac{n}{2(n-2k)}.
\]\iffalse
\[
\lambda_{0}=\frac{E_{k}(1)}{E_{k}(0)}=\frac{n-4k}{2(n-2k)}=1-\frac{n}{2(n-2k)}.
\]\fi
Since the $3$-faces in $X$ are the triples of the form $A,B,A\triangle B$, the links of a vertex is a perfect matching, hence $\lambda_{1}=-1$. It follows that when $n\le4k-1$, the size of an independent set is at most

\[
\binom{n}{2k}\left(1-\frac{1}{2(1-\lambda_{0})}\right)=\binom{n-1}{2k-1}.
\]
In particular, when $n=4k-1$, an independent set contains at most
a $\frac{2k}{4k-1}=\frac{1}{2}+O(\frac{1}{k})$ fraction of subsets.
Now suppose that $n\ge4k$, and let $\mathcal{F}$ be a triangle-free
family. Consider the following random experiment: choose a random
$(4k-1)$-subset $S$ of $[n]$, and check whether a random $2k$-subset
of $S$ belongs to $\mathcal{F}$. On the one hand, this is at most
$\frac{2k}{4k-1}$. On the other hand, this is equal to the density
of $\mathcal{F}$. This completes the proof of the following theorem.
\begin{thm}
If $\mathcal{F}$ is a family of $2k$-subsets of $[n]$ which does
not contain three distinct subsets whose symmetric difference is empty,
then $|\mathcal{F}|\le\binom{n-1}{2k-1}$ if $n\le4k-1$, and $|\mathcal{F}|\leq(1/2+O(1/k))\binom{n}{2k}$
otherwise. 

This bound is sharp: If $n\le4k-1$ then the family of
$2k$-sets containing a fixed element satisfies the condition and
has size $\binom{n-1}{2k-1}$. Otherwise, the family of $2k$-sets
containing an odd number of elements among the first $\lfloor n/2\rfloor$
satisfies condition and asymptotically contains half the $2k$-sets.
\end{thm}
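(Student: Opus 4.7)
The plan is to encode the problem as an independent-set problem in a $3$-uniform weighted hypergraph and then apply the Link bound of Theorem~\ref{thm:Generalized-Hoffman-bound}. Define $X=(V,\mu)$ on the vertex set $V=\binom{[n]}{2k}$ by letting $\mu$ be the uniform distribution on the triangles $\{A,B,A\triangle B\}\subseteq\binom{[n]}{2k}$. Then triangle-free families are exactly the independent sets of $X$, and since the symmetric group $S_n$ acts transitively on $V$ preserving $\mu$, the induced vertex measure $\mu_1$ is uniform, so $\mu_1(\mathcal{F})=|\mathcal{F}|/\binom{n}{2k}$.

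The next task is to evaluate the two eigenvalue parameters of $X$. Two $2k$-sets lie in a common triangle iff their intersection has size exactly $k$, so the skeleton of $X$ is the generalized Johnson graph $J(n,2k,k)$; for $n\le 4k-1$, its smallest eigenvalue is $\lambda_0=1-\frac{n}{2(n-2k)}$ by Brouwer et al.~\cite{brouwer2018smallest}, which I would import as a black box. For $\lambda_1$, the key observation is that a pair $\{A,B\}$ with $|A\cap B|=k$ extends uniquely to the triangle $\{A,B,A\triangle B\}$, so the skeleton of the link of any vertex is a perfect matching on its neighborhood in $J(n,2k,k)$; a perfect matching has smallest eigenvalue $-1$, giving $\lambda_1=-1$.

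Plugging these into the Link bound yields
\[
\alpha(X)\le 1-\frac{1}{(1-\lambda_0)(1-\lambda_1)}=1-\frac{n-2k}{n}=\frac{2k}{n},
\]
which proves $|\mathcal{F}|\le\binom{n-1}{2k-1}$ whenever $n\le 4k-1$. For the range $n\ge 4k$, I would apply a one-line averaging argument: sample $S\in\binom{[n]}{4k-1}$ uniformly and apply the bound just proved inside $S$ to the still-triangle-free family $\mathcal{F}\cap\binom{S}{2k}$; since every $2k$-subset of $[n]$ is contained in equally many $(4k-1)$-supersets, this gives $|\mathcal{F}|\le\frac{2k}{4k-1}\binom{n}{2k}=\left(\tfrac{1}{2}+O(1/k)\right)\binom{n}{2k}$.

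Sharpness is then routine. The star $\{A:1\in A\}$ has size $\binom{n-1}{2k-1}$ and is triangle-free because $1$ would appear in all three sets and hence in their symmetric difference. The parity family $\{A:|A\cap[\lfloor n/2\rfloor]|\text{ odd}\}$ has density tending to $1/2$ and is triangle-free, since $(A\triangle B\triangle C)\cap[\lfloor n/2\rfloor]$ has odd cardinality whenever its three summands do, and so cannot be empty. The main obstacle is the eigenvalue computation for $J(n,2k,k)$, which I would simply cite from \cite{brouwer2018smallest}; once it is granted, the link eigenvalue is elementary and the bound then falls out directly from Theorem~\ref{thm:Generalized-Hoffman-bound} together with the averaging step.
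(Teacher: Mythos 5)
Your proposal is correct and follows essentially the same route as the paper: the same $3$-uniform hypergraph on $\binom{[n]}{2k}$ with skeleton $J(n,2k,k)$, the eigenvalue $\lambda_0=1-\frac{n}{2(n-2k)}$ imported from Brouwer et al., the perfect-matching links giving $\lambda_1=-1$, the Link bound yielding $\frac{2k}{n}\binom{n}{2k}=\binom{n-1}{2k-1}$, the averaging over $(4k-1)$-subsets for $n\ge 4k$, and the same extremal families (star and parity family) for sharpness. No gaps; nothing to add.
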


Frankl,~\cite{frankl1990asymptotic}, gave the upper bound $(1/2+O(1/n))\binom{n}{k}$, which is slightly better for large $n$.

\subsection{The $p$-biased version}

The $p$-biased version of the problem is as follows:
\begin{quotation}
Given $p\le\frac{2}{3}$, how large can $\mu_{p}\left(\mathcal{F}\right)$
be if $\mathcal{F}\subseteq\mathcal{P}\left(\left[n\right]\right)$
does not contain a triangle?
\end{quotation}
The reason for the condition $p\le\frac{2}{3}$ is the fact that
the example $\left\{ A:\,\left|A\right|>\frac{2}{3}n\right\} $ is
triangle-free, and its $p$-biased measure tends to $1$ as $n$ tends
to infinity.

The $p$-biased version of Frankl's problem is the problem of
determining the independence number of the $3$-uniform hypergraph
$X^{\otimes n},$ where $X=\left(V,\mu\right)$ is with $V=\left\{ 0,1\right\} $
and 
\[
\mu\left([1,1,0]\right)=\frac{3}{2}p,\:\mu\left([0,0,0]\right)=1-\frac{3}{2}p.
\]
The induced measures are 
\begin{gather*}
\mu_{2}([1,1])=\frac{1}{2}p,\:\mu_{2}\left([1,0]\right)=p,\:\mu_{2}\left([0,0]\right)=1-\frac{3}{2}p;\\
\mu_{1}(0)=1-p,\:\mu_{1}(1)=p.
\end{gather*}
And therefore, the matrix of the normalized adjacency operator $T_{X}$
on the skeleton of $X$ is
\[
T_{X}=\left(\begin{array}{cc}
\frac{1-\frac{3}{2}p}{1-p} & \frac{\frac{1}{2}p}{1-p}\\
\frac{\frac{1}{2}p}{p} & \frac{\frac{1}{2}p}{p}
\end{array}\right)=\left(\begin{array}{cc}
\frac{2-3p}{2(1-p)} & \frac{p}{2(1-p)}\\
\frac{1}{2} & \frac{1}{2}
\end{array}\right),
\]
with eigenvalues $1$ and $\frac{1-2p}{2(1-p)}$. The induced distribution
on the link of $[0]$ is supported on the faces $[0,0]$ and $[1,1]$,
hence the corresponding matrix $T_{X_{0}}$ is
\[
T_{X_{0}}=\left(\begin{array}{cc}
1 & 0\\
0 & 1
\end{array}\right),
\]
while the link of $[1]$ is supported on $[0,1]$, and $T_{X_{1}}$
is
\[
T_{X_{1}}=\left(\begin{array}{cc}
0 & 1\\
1 & 0
\end{array}\right).
\]
The above shows that $\lambda_{0}=\frac{1-2p}{2(1-p)}$ and $\lambda_{1}=-1$.
When $p > 1/2$, $\lambda_{0}$ is negative, implying
\[
\alpha(X^{\otimes n})\leq1-\frac{1}{(1-\frac{1-2p}{2(1-p)})\cdot2}=p.
\]
When $p\le1/2$, $\lambda_{0}(X)$ is nonnegative, and so $\lambda_{0}(X^{\otimes n})\ge0$,
implying

\[
\alpha(X^{\otimes n})\leq1-\frac{1}{1\cdot2}=\frac{1}{2}.
\]
This completes the proof of the following statement.
\begin{thm}
Let $\{0,1\}^{n}$ denote the space of $\{0,1\}$-vectors of length
$n$, and $\mu$ be the $p$-biased measure on it, with $p\leq2/3$.
If $\mathcal{F}\subseteq\{0,1\}^{n}$ is a family of vectors which
does not contain three distinct vectors whose sum to zero, then $\mu(\mathcal{F})\leq\max(p,1/2)$.

This bound is sharp: if $p\leq1/2$ then the family of vectors having
odd parity satisfies the condition and has measure tending to $1/2$
as $n\to\infty$, and if $p\ge1/2$ then the set of all vectors having
$1$ as their first coordinate satisfies the condition and has measure
$p$.
\end{thm}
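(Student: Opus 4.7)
The plan is to recast the problem as computing the independence number of a tensor power of a small 3-uniform hypergraph, and then apply the high-dimensional Hoffman bound from Theorem~\ref{thm:bound-intro}. First I would build a 3-uniform hypergraph $X=(V,\mu)$ on $V=\{0,1\}$ with $\mu$ supported on the multisets $[0,0,0]$ and $[1,1,0]$, which are the only 3-multisets over $\{0,1\}$ whose entries sum to $0\pmod 2$. Requiring the marginal $\mu_{1}$ to be the $p$-biased measure on $\{0,1\}$ forces $\mu([1,1,0])=\tfrac{3p}{2}$ and $\mu([0,0,0])=1-\tfrac{3p}{2}$, which is a genuine probability measure precisely when $p\leq 2/3$. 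Under this encoding, tensor powers capture the condition of the theorem: the positive-measure triples in $X^{\otimes n}$ are precisely triples of vectors in $\{0,1\}^{n}$ summing coordinatewise to zero modulo $2$.

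Next I would compute the two link-eigenvalues that feed into the Hoffman bound. A direct calculation of the $2\times 2$ Markov matrix $T_{X}$ on the skeleton yields the nontrivial eigenvalue $\lambda_{0}(X)=\frac{1-2p}{2(1-p)}$. For the links: the link of vertex $0$ is supported on $[0,0]$ and $[1,1]$, so its normalized adjacency operator is the identity; the link of vertex $1$ is supported only on $[0,1]$, whose adjacency matrix has eigenvalues $\pm 1$. Taking the minimum over links gives $\lambda_{1}(X)=-1$.

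The argument then splits according to the sign of $\lambda_{0}(X)$. When $p\geq 1/2$, we have $\lambda_{0}(X)\leq 0$ and $\lambda_{1}(X)\leq 0$, so the tensor-power bound of Theorem~\ref{thm:bound-intro} applies directly and gives
\begin{equation}\nonumber
\alpha(X^{\otimes n})\leq 1-\frac{1}{(1-\lambda_{0}(X))(1-\lambda_{1}(X))}=1-\frac{1}{\tfrac{1}{2(1-p)}\cdot 2}=p.
\end{equation}
When $p\leq 1/2$, however, $\lambda_{0}(X)$ is nonnegative and the tensor-invariance hypothesis of Theorem~\ref{thm:bound-intro} is violated. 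In that regime I would instead apply the base bound (Theorem~\ref{thm:Generalized-Hoffman-bound}) directly to $X^{\otimes n}$: by Proposition~\ref{prop:ten-prod-evs}, $\lambda_{0}(X^{\otimes n})=\lambda_{0}(X)^{n}\geq 0$ and $\lambda_{1}(X^{\otimes n})=-1$, yielding $\alpha(X^{\otimes n})\leq 1-\frac{1}{(1-\lambda_{0}(X)^{n})\cdot 2}\leq \tfrac{1}{2}$.

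The main delicate point, and in my view the only real obstacle, is this case split: one has to avoid invoking tensor-power invariance in the regime $p\leq 1/2$ (where $\lambda_{0}>0$) and instead go through Proposition~\ref{prop:ten-prod-evs} to compute the eigenvalues of $X^{\otimes n}$ and apply the base bound there. Sharpness of $\max(p,1/2)$ is then exhibited by two standard examples: the dictatorship $\{x:x_{1}=1\}$ of measure $p$ when $p\geq 1/2$, and the odd-parity family of measure tending to $1/2$ when $p\leq 1/2$.
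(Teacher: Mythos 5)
Your proposal is correct and follows essentially the same route as the paper: the same two-vertex $3$-uniform hypergraph with $\mu([1,1,0])=\tfrac{3}{2}p$, the same computation giving $\lambda_{0}=\tfrac{1-2p}{2(1-p)}$ and $\lambda_{1}=-1$, and the same case split, where for $p\le 1/2$ the paper likewise notes $\lambda_{0}(X^{\otimes n})\ge 0$ and applies the base bound to $X^{\otimes n}$ directly rather than invoking tensor-invariance. The sharpness examples (dictatorship for $p\ge 1/2$, odd-parity family for $p\le 1/2$) also coincide with the paper's.
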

\section{Mantel's Theorem\label{sec:Mantel-proof}}
The classical Mantel's theorem bounds the number of edges in a triangle-free
graph.
\begin{thm}
\cite{mantel1907} If a graph on $n$ vertices contains no triangles,
then it contains at most $\left\lfloor \frac{n^{2}}{4}\right\rfloor $
edges.
\end{thm}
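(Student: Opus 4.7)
The plan is to construct a $3$-uniform hypergraph whose tensor powers capture triangle-freeness, then to apply the high-dimensional Hoffman bound of Theorem~\ref{thm:bound-intro} together with a quasi-balanced blow-up.

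Set $V=\{0,1\}$, $W=V\times V$, and define $X=(W,\mu)$ to be the $3$-uniform hypergraph in which $\mu$ is the uniform distribution on the eight $3$-multisets of the form $[(a_{2},a_{3}),(a_{1},a_{3}),(a_{1},a_{2})]$, one per triple $(a_{1},a_{2},a_{3})\in V^{3}$. Under the identification of $W^{\ell}$ with $V^{\ell}\times V^{\ell}$, the $3$-faces of $X^{\otimes\ell}$ are exactly the multisets $[(a_{2},a_{3}),(a_{1},a_{3}),(a_{1},a_{2})]$ with $a_{1},a_{2},a_{3}\in V^{\ell}$. Consequently, if $H$ is a simple triangle-free graph on $V^{\ell}=\{0,1\}^{\ell}$, then the symmetric set $I_{H}=\{(u,v)\in V^{\ell}\times V^{\ell}:uv\in E(H)\}$ is an independent set of $X^{\otimes\ell}$: in any forbidden triple, either the $a_{i}$ are all distinct and form a triangle in $H$, or there is a repetition that forces a self-loop. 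Since $H$ is simple, $|I_{H}|=2|E(H)|$.

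Next I compute $\lambda_{0}(X)$ and $\lambda_{1}(X)$. The induced measure $\mu_{1}$ turns out to be uniform on $W$. Reading off $\mu_{2}$ from $\mu$ and applying~\eqref{eq:norm-adj}, one writes down the $4\times 4$ normalized adjacency matrix $T_{X}$; its eigenvalues come out to be $1,\tfrac12,\tfrac16,0$, with kernel spanned by the parity character $(u_{1},u_{2})\mapsto(-1)^{u_{1}+u_{2}}$, so $\lambda_{0}(X)=0$. For $\lambda_{1}(X)$ one computes the link measure $\mu_{v}$ at each vertex. The links of the diagonal vertices $(0,0)$ and $(1,1)$ are weighted graphs with self-loops whose normalized adjacency has minimum eigenvalue $1/3$. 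The links of the off-diagonal vertices $(0,1)$ and $(1,0)$ are weighted bipartite graphs between the pair $\{(0,0),(1,1)\}$ and the pair $\{(0,1),(1,0)\}$; their normalized adjacency has eigenvalues $\pm 1,0,0$, so the minimum is $-1$. Hence $\lambda_{1}(X)=-1$.

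Applying Theorem~\ref{thm:bound-intro}, and noting that $\lambda_{0}(X)=0\le 0$ satisfies the tensor-invariance hypothesis, yields
\[
\alpha(X^{\otimes\ell})\;\le\; 1-\frac{1}{(1-\lambda_{0})(1-\lambda_{1})}\;=\;\frac{1}{2}.
\]
For any triangle-free $H$ on $N=2^{\ell}$ vertices, this gives $2|E(H)|/N^{2}=\mu_{1}(I_{H})\le 1/2$, i.e.\ $|E(H)|\le N^{2}/4$, which is Mantel's bound when the number of vertices is a power of two.

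For a general vertex count $n$, I finish with a standard quasi-balanced blow-up. Given a triangle-free $H$ on $[n]$, pick $\ell$ with $M=2^{\ell}\ge n$, partition $[M]$ into $n$ groups of sizes in $\{\lfloor M/n\rfloor,\lceil M/n\rceil\}$, and place a complete bipartite graph between two groups whenever their representatives are $H$-adjacent. The resulting graph $\tilde{H}$ is triangle-free with $|E(\tilde H)|\ge\lfloor M/n\rfloor^{2}|E(H)|$, and the power-of-two case gives $|E(\tilde H)|\le M^{2}/4$. Letting $\ell\to\infty$ produces $|E(H)|\le n^{2}/4$, whence $|E(H)|\le\lfloor n^{2}/4\rfloor$ by integrality. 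The main obstacle is the link computation yielding $\lambda_{1}(X)=-1$: carefully unpacking the link measure $\mu_{v}$ at an off-diagonal vertex, identifying its skeleton as the claimed weighted bipartite graph, and computing its minimum eigenvalue; the rest of the argument is bookkeeping.
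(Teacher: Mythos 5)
There is a genuine gap at the very first step, the encoding, and it is fatal to the rest. Your base hypergraph $X$ on $W=\{0,1\}^{2}$ collapses the three edge-slots of a triangle onto a single vertex set, i.e.\ it is not $3$-partite. In the paper's tensor product the coordinates are coupled only through unordered multisets: a multiset $\{w_{1},w_{2},w_{3}\}\subseteq W^{\ell}$ is a $3$-face of $X^{\otimes\ell}$ as soon as in \emph{each} coordinate $t$ the three pairs can be matched, in some order that may depend on $t$, to a pattern $\bigl((a_{2}^{t},a_{3}^{t}),(a_{1}^{t},a_{3}^{t}),(a_{1}^{t},a_{2}^{t})\bigr)$. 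Because the alignment may differ from coordinate to coordinate, the $3$-faces of $X^{\otimes\ell}$ are strictly more than the multisets $[(a_{2},a_{3}),(a_{1},a_{3}),(a_{1},a_{2})]$ with $a_{i}\in\{0,1\}^{\ell}$ that you list. Concretely, for $\ell=2$ take in both coordinates the face $\{(1,0),(0,0),(0,1)\}$ of $X$ (coming from $(a_{1},a_{2},a_{3})=(0,1,0)$) and combine the two coordinates with the identity alignment in the first and a transposition in the second: you get the $3$-face $\{((1,0),(0,0)),\ ((0,1),(0,0)),\ ((0,0),(1,1))\}$ of $X^{\otimes 2}$, i.e.\ the three directed edges $(1,0)\to(0,0)$, $(0,1)\to(0,0)$, $(0,0)\to(1,1)$. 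These form a star, not a triangle, so for the triangle-free star $H$ on $\{0,1\}^{2}$ centered at $(0,0)$ the (symmetric) set $I_{H}$ contains a $3$-face and is \emph{not} independent; your claim ``$H$ triangle-free $\Rightarrow I_{H}$ independent'' fails. You also cannot escape by decreeing that only the aligned multisets are faces: the identification $(V\times V')^{[k]}\simeq V^{[k]}\times V'^{[k]}$ is not a bijection, the ``any alignment'' reading is the one all of the paper's applications rely on (e.g.\ Frankl's triangle problem, where which vector carries the $0$ must be allowed to vary per coordinate), and with a different product you could no longer invoke Theorem~\ref{thm:bound-intro}, whose tensor statement is proved for that product.

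This is precisely why the paper does not collapse the parts: it works with the $3$-partite hypergraph $\mathcal{X}_{2^{n}}$ on $V_{1}\cup V_{2}\cup V_{3}$, each part a copy of $[2^{n}]\times[2^{n}]$, so that the partite structure forces a consistent alignment across coordinates and the faces of the power are genuine triangles $[(i,j),(j,k),(k,i)]$. The price is spectral: the skeleton of $\mathcal{X}_{2^{n}}$ is the Kronecker product of the tripartite $3\times3$ matrix $M$ (eigenvalues $1,-\tfrac12,-\tfrac12$) with $M_{1}^{\otimes n}$, so $\lambda_{0}=-\tfrac12$ and a naive application of Theorem~\ref{thm:Generalized-Hoffman-bound} is too weak; the paper recovers the factor $\tfrac12$ by showing that the encoded set $I$ is invariant under the $S_{3}$-action on the parts and under the coordinate swap, hence $1_{I}$ is orthogonal to all negative eigenspaces and $\lambda_{0}$ may be replaced by $0$ in the bound. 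Your collapsed $4$-vertex hypergraph has the pleasant spectrum $\lambda_{0}(X)=0$, $\lambda_{1}(X)=-1$ (those computations are correct) exactly because it encodes a weaker constraint than triangle-freeness; the eigenvalue work and the blow-up from $2^{\ell}$ vertices to general $n$ (a nice step the paper omits, as it only treats $2^{n}$ vertices) are fine in themselves but rest on the broken reduction.
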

We give a spectral proof of Mantel's Theorem for graphs with $2^{n}$
vertices that relies on a variation of the bound (\ref{eq:generalized Hoffman bound}).
Apart from presenting a spectral proof of this, we would like to show
the flexibility of the presented spectral approach. In some cases, one can improve the bound (\ref{eq:generalized Hoffman bound})
by taking not the smallest eigenvalue of the normalized adjacency
operator but a larger one. This is possible when the characteristic
function of the independent set we are interested in is orthogonal
to the eigenfunctions that correspond to smaller eigenvalues.
\begin{proof}
First, we encode the statement of the theorem in terms of the independent
sets of a hypergraph. Let $G$ be a triangle-free graph on $2^{n}$
vertices, which we identify with the set $[2^{n}]$. Let $\mathcal{X}_{2^{n}}$
be a $3$-partite $3$-uniform hypergraph on the vertex set $V=V_{1}\cup V_{2}\cup V_{3}$,
where each part $V_{i},\:i=1,2,3,$ is a copy of the set $\left[2^{n}\right]\times\left[2^{n}\right]$.
The $3$-faces of $\mathcal{X}_{2^{n}}$ are the triples of the form
$\left[\left(i,j\right),\left(j,k\right),\left(k,i\right)\right]$,
where $1\leq i,j,k\leq2^{n}$. Assume the probability distribution
on the $3$-faces to be the uniform probability distribution on these
triples. We encode $G$ as an independent set $I$ of $\mathcal{X}_{2^{n}}$
, namely, $I=I_{1}\cup I_{2}\cup I_{3}$, where for each $i=1,2,3$,
\[
I_{i}=\left\{ \left(v,u\right)\in V_{i}:\,\text{the set }\left\{ v,u\right\} \text{ makes an edge of }G\right\} .
\]
Note that $\mathcal{X}_{2}^{\otimes n}=\mathcal{X}_{2^{n}}$ and hence
this gives the desired encoding of $G$ as the independent set $I$
in a $3$-uniform hypergraph which is also a tensor power.

It follows immediately from the construction of $\mathcal{X}_{2^{n}}$,
in particular, from the fact that it is $3$-partite, that the matrix
of the normalized adjacency operator $T$ on the skeleton of $\mathcal{X}_{2^{n}}$
is the Kronecker product of the following $3\times3$ matrix
\[
M=\left(\begin{array}{ccc}
0 & \frac{1}{2} & \frac{1}{2}\\
\frac{1}{2} & 0 & \frac{1}{2}\\
\frac{1}{2} & \frac{1}{2} & 0
\end{array}\right)
\]
and the matrix $M_{n}$ which, in turn, is the $n$-th tensor power
of $M_{1}$, given by the following $4\times4$ matrix indexed by
the elements of $[2]\times[2]$:
\[
\left.M_{1}=\begin{array}{c}
\left(1,1\right)\\
\left(1,2\right)\\
\left(2,1\right)\\
\left(2,2\right)
\end{array}\underset{\begin{array}{cccc}
\left(1,1\right) & \left(1,2\right) & \left(2,1\right) & \left(2,2\right)\end{array}}{\begin{cases}
\underbrace{\left(\begin{array}{cccc}
\frac{1}{2} & \frac{1}{4} & \frac{1}{4} & 0\\
\frac{1}{4} & 0 & \frac{1}{2} & \frac{1}{4}\\
\frac{1}{4} & \frac{1}{2} & 0 & \frac{1}{4}\\
0 & \frac{1}{4} & \frac{1}{4} & \frac{1}{2}
\end{array}\right)}\end{cases}}.\right.
\]
Since $T$ is the Kronecker product of $M$ and $M_{n}$, its eigenvalues
are exactly the products of the eigenvalues of $M$ and $M_{n}$.
The eigenvectors and eigenvalues of $M$ are 
\[
\left(\left(\begin{array}{c}
1\\
1\\
1
\end{array}\right),1\right),\left(\left(\begin{array}{c}
1\\
0\\
-1
\end{array}\right),-\frac{1}{2}\right),\left(\left(\begin{array}{c}
1\\
-1\\
0
\end{array}\right),-\frac{1}{2}\right).
\]
The eigenvectors and eigenvalues of $M_{1}$ are
\begin{multline*}
\left(\chi_{1},\lambda_{1}\right)=\left(\left(\begin{array}{c}
1\\
1\\
1\\
1
\end{array}\right),1\right),\left(\chi_{2},\lambda_{2}\right)=\left(\left(\begin{array}{c}
-1\\
1\\
1\\
-1
\end{array}\right),0\right),\\
\left(\chi_{3},\lambda_{3}\right)=\left(\left(\begin{array}{c}
1\\
0\\
0\\
-1
\end{array}\right),\frac{1}{2}\right),\left(\chi_{4},\lambda_{4}\right)=\left(\left(\begin{array}{c}
0\\
1\\
-1\\
0
\end{array}\right),-\frac{1}{2}\right).
\end{multline*}
We now exploit the symmetries of the set $I$ to show that its characteristic
function $1_{I}$ is orthogonal to the subspace of eigenvectors of
$T$ with negative eigenvalues. First, note that the set $I$ is invariant
under the action of the symmetric group $S_{3}$ acting on $\mathcal{X}_{2^{n}}$
by permutations of the parts $\left\{ V_{1},V_{2},V_{3}\right\} $.
The only eigenvector of $M$ invariant under the action of $S_{3}$
is the constant vector with eigenvalue $1$. A vertex in $\mathcal{X}_{2}$
is a pair $\left(i,j\right)$. Let $\mathcal{S}_{0}$ be the operator
that swaps between $i$ and $j$, which satisfies 
\[
\mathcal{S}_{0}\chi_{i}=\begin{cases}
\chi_{i} & i=1,2,3\\
-\chi_{i} & i=4
\end{cases}.
\]
The operator $\mathcal{S}$ that swaps the coordinates of a vertex
in $\mathcal{X}_{2^{n}}$ is of the form $\mathcal{S}=\mathcal{S}_{0}^{\otimes n}$.
The set $I$ is invariant under the action of $\mathcal{S}$ by the
construction. 

Since the characteristic function $1_{I}$ is orthogonal to the subspace
spanned by eigenvectors of the normalized adjacency operator with
negative eigenvalues, we may take $0$ instead of $\lambda_{0}$ in
(\ref{eq:generalized Hoffman bound}). Note that since the link of
most vertices in $\mathcal{X}_{2^{n}}$ is bipartite, $\lambda_{1}=-1$
(which is the minimal possible eigenvalue of a Markov matrix). Hence,
the bound (\ref{eq:generalized Hoffman bound}) reads as
\[
\frac{|I|}{3\cdot4^{n}}\leq1-\frac{1}{1\cdot2}=\frac{1}{2}.
\]
Taking into account the fact that every edge of $G$ is counted six
times in $|I|$ completes the proof.
\end{proof}

\section{Frankl-Tokushige Theorem on Intersecting Families}
Our method also provides a new proof for the result of Frankl and Tokushige on $k$-wise intersecting families,~\cite{frankl2003weighted}.
\begin{thm}\cite{frankl2003weighted}
Let $k\ge2$ and $p\leq1-\frac{1}{k}$. Assume $\mathcal{F}\subset\mathcal{P}([n])$ is $k$-wise intersecting, that is, for all $F_1,\dots,F_k\in\mathcal{F}$
\[
 F_1\cap\dots\cap F_k \neq \emptyset.
\]
Then $\mu_p(\mathcal{F}) \leq p$, where $\mu_p$ stands the $p$-biased measure.
\end{thm}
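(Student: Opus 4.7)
The plan is to encode $k$-wise intersecting families as independent sets in a tensor power of a two-vertex $k$-uniform hypergraph and invoke Theorem~\ref{thm:bound-intro}. Let $X=(\{0,1\},\mu)$ be a weighted $k$-uniform hypergraph whose $k$-faces are exactly the $k$-multisets on $\{0,1\}$ other than the all-ones multiset $[1^{k}]$, and whose vertex measure satisfies $\mu_{1}(1)=p$. Then $X^{\otimes n}$ lives on $\{0,1\}^{n}$ with the product $p$-biased measure $\mu_{p}$, and a family $\mathcal{F}\subseteq\{0,1\}^{n}$ is $k$-wise intersecting if and only if no $k$-tuple of vectors in $\mathcal{F}$ forms a face of $X^{\otimes n}$: the intersection of $F_{1},\dots,F_{k}$ is empty exactly when in every coordinate the projection multiset is different from $[1^{k}]$. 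Hence it suffices to prove $\alpha(X^{\otimes n})\le p$.

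For $p\le 1/2$, I take the canonical recursive weighting: the unique distribution on $k$-multisets under which the induced distributions $\mu_{1},\dots,\mu_{k-1}$ on sub-faces all coincide with the $p$-biased product measure. Solving the sub-sampling identities yields the explicit formula
\[
\mu([1^{j}0^{k-j}])=\binom{k}{j}\bigl[p^{j}(1-p)^{k-j}+(-1)^{k-j+1}p^{k}\bigr],\qquad 0\le j\le k-1,
\]
whose entries are non-negative precisely when $p\le 1/2$. Since $\mu_{i+2}$ is a product measure for $i+2\le k-1$, the distribution of $\boldsymbol{\mu}_{i+2}\setminus\boldsymbol{\mu}_{i}$ conditional on any fixed value of $\boldsymbol{\mu}_{i}$ is two i.i.d.\ Bernoulli$(p)$ samples. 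Consequently, for every $i$-face $\sigma$ with $i\le k-3$ the 2-face distribution in the link $X_{\sigma}$ is product, the normalized adjacency of the corresponding skeleton has only eigenvalues $0$ and $1$, and $\lambda_{i}(X)=0$. Only the link of the all-ones $(k-2)$-face $[1^{k-2}]$ contributes a non-trivial eigenvalue: a short computation identifies this link with the base graph $\mu([1,0])=2p$, $\mu([0,0])=1-2p$, whose smallest eigenvalue is $\lambda_{k-2}(X)=-p/(1-p)$. Therefore $\prod_{i=0}^{k-2}(1-\lambda_{i}(X))=1/(1-p)$, and Theorem~\ref{thm:bound-intro} together with Proposition~\ref{prop:ten-prod-evs} gives $\alpha(X^{\otimes n})\le 1-(1-p)=p$.

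For $p\in(1/2,1-1/k]$ the recursive formula produces negative coefficients in positions $j$ with $k-j$ even, so a different construction is needed. I will take $\mu$ to be a convex combination concentrated on the heavier legal multisets $[1^{k-1}0]$, $[1^{k-2}0^{2}]$, and possibly $[0^{k}]$, with mixing coefficients chosen so that $\mu_{1}(1)=p$ and $\lambda_{0}(X)\le 0$. For $p\in[(k-2)/(k-1),1-1/k]$ the single-face choice $\mu([1^{k-1}0])=kp/(k-1)$, $\mu([0^{k}])=1-kp/(k-1)$ already delivers the tight bound by exactly the computation used in Section~\ref{sec:Frankl-triangle} for the extended triangle problem; for the intermediate range one interpolates between this pure weighting and the recursive one and verifies both $\prod_{i}(1-\lambda_{i}(X))=1/(1-p)$ and $\lambda_{i}(X)\le 0$ for $i<k-2$ directly. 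The main obstacle is producing a family of such weightings covering the entire window $p\in(1/2,(k-2)/(k-1))$ and establishing uniform spectral identities along it; once this is done, Proposition~\ref{prop:ten-prod-evs} lifts the bound from $X$ to $X^{\otimes n}$, and we conclude $\mu_{p}(\mathcal{F})\le\alpha(X^{\otimes n})\le p$.
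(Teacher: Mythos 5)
Your reduction to independent sets in $X^{\otimes n}$ is the right frame, but the proof as written does not cover the whole range of $p$. Your first construction works only for $p\le 1/2$ (the recursive weights go negative beyond that, as you note), and the two-atom weighting you borrow for the top end gives $\lambda_0\le 0$ only for $p\ge\frac{k-2}{k-1}$; for the middle window $p\in\bigl(\tfrac12,\tfrac{k-2}{k-1}\bigr)$ you explicitly defer to an unconstructed ``interpolation''. That window is nonempty for every $k\ge 4$ and is most of the range for large $k$, so this is a genuine gap, not a routine verification. Note also that $p\le 1/2$ is the easy regime: a $k$-wise intersecting family is in particular $2$-wise intersecting, so the $p$-biased EKR bound already yields $\mu_p(\mathcal{F})\le p$ there. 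The same observation is the cheapest way to close your gap: $\mathcal{F}$ is $j$-wise intersecting for every $2\le j\le k$, and the intervals $\bigl[\tfrac{j-2}{j-1},\tfrac{j-1}{j}\bigr]$ cover all of $\bigl[0,\tfrac{k-1}{k}\bigr]$, so for each $p$ one picks $j$ with $\tfrac{j-2}{j-1}\le p\le\tfrac{j-1}{j}$ and runs the two-atom argument on the $j$-uniform hypergraph. For comparison, the paper uses the single weighting $\mu([0^{k}])=1-\tfrac{k}{k-1}p$, $\mu([0,1^{k-1}])=\tfrac{k}{k-1}p$ for all admissible $p$, computing $\lambda_0=\frac{\frac{k-2}{k-1}-p}{1-p}$ and $\lambda_\ell=-\frac{1}{k-1-\ell}$ so that $\prod_i(1-\lambda_i)=\frac{1}{1-p}$ in one stroke; but the sign condition $\lambda_0\le 0$ needed for the tensor-power step holds there only for $p\ge\frac{k-2}{k-1}$, so that proof, too, implicitly relies on a reduction of the above kind for smaller $p$.

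Your $p\le 1/2$ construction (the full-support measure whose marginals of order $\le k-1$ are all $p$-biased products) is correct and genuinely different from the paper's, and the observation that product $2$-face distributions force $\lambda_i=0$ for $i\le k-3$ is a nice structural shortcut. One inaccuracy to repair: it is not true that only the link of $[1^{k-2}]$ has a non-trivial eigenvalue. A direct computation shows that the link of $[1^{a}0^{b}]$ with $a+b=k-2$ has non-trivial eigenvalue $(-1)^{b+1}\bigl(\tfrac{p}{1-p}\bigr)^{b+1}$, which is negative for every even $b$. Since your measure has full support, every $(k-2)$-multiset is a face and $\lambda_{k-2}$ is the minimum over all of them; you therefore need the inequality $\bigl(\tfrac{p}{1-p}\bigr)^{b+1}\le\tfrac{p}{1-p}$, which holds precisely because $p\le 1/2$, to conclude that the minimum is attained at $b=0$ and equals $-\tfrac{p}{1-p}$. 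With that check added, $\prod_{i=0}^{k-2}(1-\lambda_i)=\frac{1}{1-p}$ and the bound $\alpha(X^{\otimes n})\le p$ follow as you say.
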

\begin{proof}
Let $X$ be the $k$-uniform hypergraph on $\{0,1\}$ weighted by the measure $\mu([0^{(k)}])=1-\frac{k}{k-1}p$, $\mu([0,1^{(k-1)}])=\frac{k}{k-1}p$. Here $0^{(k)}$ means $k$ copies of $0$. The induced measure $\mu_1$ on the vertex set $\{0,1\}$ is the $p$-biased one, i.e., $\mu_1(1) = p$ and $\mu_1(0)=1-p$. The matrix form of $T_{X}$ is directly calculated to be
\[
\begin{pmatrix}\frac{1-\frac{k}{k-1}p}{1-p} & \frac{\frac{1}{k-1}p}{1-p}\\
\frac{1}{k-1} & \frac{k-2}{k-1}
\end{pmatrix},
\]
from which it follows that its smallest eigenvalue is $\lambda_{0} = \frac{\frac{k-2}{k-1}-p}{1-p}<0$ hence $\frac{1}{1-\lambda_{0}}=(k-1)(1-p)$.

In order to calculate $\lambda_{i}$ for $i>0$, notice that the only
links with non-trivial $\mu_{1}(X_{S})$ are of the faces $S=[1^{\ell}]$.
The matrix form of $T_{X_{S}}$ is directly calculated to be
\[
\begin{pmatrix}0 & 1\\
\frac{1}{k-1-\ell} & \frac{k-2-\ell}{k-1-\ell}
\end{pmatrix},
\]
and so $\lambda_{\ell}=-\frac{1}{k-1-\ell}<0$ and $\frac{1}{1-\lambda_{\ell}}=\frac{k-1-\ell}{k-\ell}$.
Applying the generalized Hoffman bound for tensor powers, we conclude
\[
\alpha(X^{\otimes n})\leq1-(k-1)(1-p)\cdot\frac{k-2}{k-1}\cdot\cdots\cdot\frac{1}{2}=p.
\]
Since the edges of $X$ are exactly the multisets with either all 0's or
exactly one 0, every $k$-wise intersecting set is independent in $X^{\otimes n}$.
\end{proof}

\iffalse

\section{Open problems\label{sec:Open-problems}}
\begin{itemize}
\item Stability?
\item $s$-wise $t$-intersecting problem may be solved using this method?
\item Which Turán problems for hypergraphs may be solved using this method?
\end{itemize}

\section{TODO's}
\begin{enumerate}
\item Noam: state and prove k-uniform version of the Frankl's triangle problem
(Yuval: Done)
\item Yuval: add a section with optimization approach (Done)
\item Kosta: fix the bibliography
\item Yuval \& Noam: read the whole thing (Yuval: Done)
\item Yuval \& Noam: write open problems and further directions
\end{enumerate}
%
\fi

\section*{Acknowledgements}
The authors are grateful to Ehud Friedgut, Gil Kalai, Guy Kindler, and Dor Minzer for valuable discussions.

\subsection*{Funding}
YF is a Taub Fellow, and is supported by the Taube Foundation and ISF grant 1337/16.
KG is currently supported by the SNF grant number 20002\_169106, and by ERC grant 336283 while at the Weizmann Institute and Bar-Ilan University.

\bibliographystyle{plain}
\bibliography{Refs}

\end{document}